 \documentclass[11pt]{article}

\usepackage{amsmath,amsfonts,amsthm,amscd,amssymb,graphicx}
\usepackage{mathabx}

\usepackage[utf8]{inputenc}

\usepackage{subfigure}
\usepackage{xcolor}

\numberwithin{equation}{section}

\usepackage{hyperref}

\usepackage{cases}

\setlength{\evensidemargin}{0in} \setlength{\oddsidemargin}{0in}
\setlength{\textwidth}{6.8in} \setlength{\topmargin}{0in}
\setlength{\textheight}{8.8in}


\newtheorem{theorem}{Theorem}[section]

\newtheorem{lemma}[theorem]{Lemma}

\newtheorem{remark}[theorem]{Remark}

\newcommand{\RR}{\mathbb{R}}

\newcommand{\cH}{\mathcal{H}}

\def\cA{\mathcal{A}}

\begin{document}

\title{Modified scattering for long-range Hartree equations of infinite rank near vacuum} 

\author{Toan T. Nguyen\footnotemark[1]
	\and Chanjin You\footnotemark[1]
}

\maketitle

\footnotetext[1]{Penn State University, Department of Mathematics, State College, PA 16802. Emails: nguyen@math.psu.edu, cby5175@psu.edu. }


\begin{abstract}

We establish the asymptotic behavior and decay of solutions near vacuum to the Hartree equation with the Coulomb interaction potential in three dimensions. Our approach is direct, which consists of independently deriving the sharp dispersive decay estimates for the density function and establishing the boundedness of energy norms. The global in time well-posedness is done without introducing the phase correction, while the phase modification is explicit in terms of the density function. 

\end{abstract}


\section{Introduction}
Consider the time-dependent Hartree equation of infinite rank
\begin{equation}
	\label{Hartree}
	\begin{cases}
		\begin{aligned}
			&i \partial_{t} \gamma = [ - \Delta + w \star_x \rho_\gamma , \, \gamma ] \\
			&\gamma_{\vert_{t=0}} = \gamma_{0}
		\end{aligned}
	\end{cases}
\end{equation}
in the whole space $\mathbb{R}^3$.
Here $\gamma(t)$ is the one-particle density operator, which is a self-adjoint, nonnegative and bounded operator on $L^2(\mathbb{R}^3)$. The nonlinear interaction between particles are described through the spatial convolution $w \star_{x} \rho_{\gamma}$, where $w$ is the two-body interaction potential, and $\rho_{\gamma}(t)$ is the density function associated with operator $\gamma(t)$ defined by 
$$\rho_{\gamma}(t,  x) = \gamma(t,x,x)$$ 
where $\gamma(t,x,y)$ is the integral kernel of $\gamma(t)$. 


In the case when $\gamma(t)$ is of rank $1$, there is some $u \in L^2 (\mathbb{R}^{d})$ so that $\gamma(t) = \lvert u(t) \rangle \langle u(t) \rvert$. Then \eqref{Hartree} is reduced to the standard Hartree equation, namely,
\begin{equation}
	\begin{cases}
		\begin{aligned}
			&i \partial_{t} u = (- \Delta + w \star_x |u|^2) u \\
			&u_{\vert_{t=0}} = u_{0}.
		\end{aligned}
	\end{cases}
\end{equation}
There are vast literatures concerning the large time behavior of solutions to this equation. Especially, for $d=3$, Hayashi and Naumkin \cite{Hayashi1998a} proved the global wellposedness and modified scattering using the pseudoconformal invariance of free Schr\"{o}dinger operator. Later the same result is reproved by Kato and Pusateri \cite{Kato2011}, using the spacetime resonance method. In the case when $\gamma(t)$ is of finite rank, the scattering theory was established in \cite{Wada2002, Ikeda2012}.

Our main interest is \eqref{Hartree} in which infinite rank operators are allowed. The global wellposedness of \eqref{Hartree} for trace-class initial data was studied in \cite{Bove1974, Bove1976, Chadam1976, Zagatti1992, Jerome2015}. Pusateri and Sigal \cite{Pusateri2021} studied the asymptotic stability of small solutions of the time-dependent Kohn-Sham equations. {Their results include scattering for Hartree equations in the full short-range regime, leaving the Coulomb case open. We resolve it in this paper, confirming the Coulomb potential is critical to the scattering theory.}

Furthermore, we propose a direct simple proof of the global wellposedness of \eqref{Hartree} with the Coulomb interaction potential in $\mathbb{R}^3$, namely, $w(x) =\pm |x|^{-1}$, $x \in \mathbb{R}^3$. This also shows that the time decay of global solutions is the same as that of linear solutions. 
{Interestingly, phase correction is not necessary for the global wellposedness.}
Next, we investigate the asymptotic stability of small solutions of \eqref{Hartree}. We establish a modified scattering result for \eqref{Hartree}.
{We construct a phase correction term at the level of the density function, rather than the density operator itself. This allows us to extract the term decaying at rate $t^{-1}$ without performing the entire spacetime resonance argument (e.g., \cite{Kato2011}).}

\subsection{Main result}

Our main result concerns with the scattering of solutions to the Cauchy problem \eqref{Hartree} with spatially localized data. We denote by $\cH^k$ and $\cA^k$ the spaces of self-adjoint, nonnegative and bounded operators $\gamma$ on $L^2(\mathbb{R}^3)$
with corresponding finite norms
\begin{equation}\label{def-norms} 
\begin{aligned}
\| \gamma \|_{\cH^k} &= \| \langle \nabla_x\rangle^k \langle \nabla_y\rangle^k \gamma(\cdot, \cdot)\|_{L^2_{x,y}}
, \qquad \| \gamma \|_{\cA^k} = \| \langle x\rangle^k \langle y\rangle^k \gamma(\cdot, \cdot)\|_{L^2_{x,y}}
\end{aligned}\end{equation}
respectively, in which $\gamma(x,y)$ denotes the integral kernel of $\gamma$. Here, $\langle a\rangle = \sqrt{1+|a|^2}$. When $k=0$, we simply write $\mathcal{HS} =\cH^0= \cA^0$ (which coincides with the usual Hilbert-Schmidt spaces). 

The main result of this paper reads as follows. 

\begin{theorem}
	\label{thm:GWP}
Let $w(x) = \pm |x|^{-1}$ be the Coulomb pair interaction potential. Assume that initial data $\gamma_{0}$ is of trace-class with finite norm 
	$$\varepsilon :=\| \gamma_{0}\|_{\cA^2} + \|\gamma_{0} \|_{\cH^2}  < \infty.$$
Then, for sufficiently small $\varepsilon $, the Cauchy problem \eqref{Hartree} has a unique global-in-time solution $ \gamma \in C(\RR_+; \cH^2)$, whose density satisfies   
	\begin{equation}\label{eq:rho-decay}
		\|  \rho (t) \|_{L^{p}_{x}} \lesssim \varepsilon  \langle t \rangle^{- 3(1-1/p)} 
	\end{equation}
	for $p\in [1, \infty]$. In addition, there is a unique operator $\gamma_{\infty} \in \mathcal{HS}$ and function $g_\infty(\cdot)$ so that 
		\begin{equation}\label{modifieds}
			\|   \gamma(t) - e^{-i(-t\Delta + g_{\infty}(-i\nabla) \log t)} \gamma_{\infty} e^{i(-t\Delta + g_{\infty}(-i\nabla) \log t)} \|_{\mathcal{HS}} \lesssim \varepsilon  \langle t \rangle^{-\delta}. 
		\end{equation}
\end{theorem}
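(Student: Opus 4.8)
The plan is to work in the interaction picture. Set $\sigma(t):=e^{-it\Delta}\gamma(t)e^{it\Delta}$, which solves $i\partial_t\sigma=[V^{(t)},\sigma]$ with $V^{(t)}:=e^{-it\Delta}M_{V_\gamma}e^{it\Delta}$ and $V_\gamma:=w\star_x\rho_\gamma=\pm|x|^{-1}\star\rho_\gamma$. Local well-posedness in $\cH^2\cap\cA^2$ and a continuation criterion are routine, so everything reduces to global a priori control of three quantities on any interval $[0,T]$: the energy $\|\gamma(t)\|_{\cH^2}$, a weighted profile norm $\|\sigma(t)\|_{\cA^2}$ (which we only need to keep below $\varepsilon\langle t\rangle^{\delta}$ for a small $\delta=O(\varepsilon)$), and the decay \eqref{eq:rho-decay}. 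Two quantities come for free: $\|\gamma(t)\|_{\mathcal{HS}}=\|\gamma_0\|_{\mathcal{HS}}$ and $\mathrm{Tr}\,\gamma(t)=\mathrm{Tr}\,\gamma_0\lesssim\varepsilon$ are conserved.

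For the density decay I would write the kernel of $e^{it\Delta}$ explicitly, giving $\rho_\gamma(t,x)=(4\pi t)^{-3}\int e^{i(|x'|^2-|y'|^2)/4t}e^{-ix\cdot(x'-y')/2t}\sigma(t,x',y')\,dx'dy'$. The crude bound $|\rho_\gamma(t,x)|\le(4\pi t)^{-3}\|\langle x'\rangle^{-2}\langle y'\rangle^{-2}\|_{L^2}\|\sigma(t)\|_{\cA^2}$ together with $\|\rho_\gamma(t)\|_{L^1}=\mathrm{Tr}\,\gamma(t)\lesssim\varepsilon$ and interpolation already yields \eqref{eq:rho-decay} up to an $\langle t\rangle^{\delta}$ loss; to remove it one keeps the leading term of the stationary-phase expansion, which is $(4\pi t)^{-3}$ times the momentum density $\rho^{\mathrm{mom}}_\sigma(t,\xi):=\widehat\sigma(t)(\xi,\xi)$ evaluated at $\xi=x/2t$, with the $\cA^2$ norm appearing only against a gain $\langle t\rangle^{-3-\kappa}$ in the remainder. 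The structural point that makes this work is that $\rho^{\mathrm{mom}}_\sigma(t)$ stays bounded in $L^\infty$: the leading commutator in the profile equation is diagonal-free in Fourier, so $\partial_t\rho^{\mathrm{mom}}_\sigma$ only feels the integrable-in-time remainder.

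For the Coulomb nonlinearity I would exploit $\Delta V_\gamma=\mp4\pi\rho_\gamma$, so that $\|V_\gamma(t)\|_{L^\infty}\lesssim\|\rho(t)\|_{L^1}^{2/3}\|\rho(t)\|_{L^\infty}^{1/3}\lesssim\varepsilon\langle t\rangle^{-1}$, $\|\nabla V_\gamma(t)\|_{L^\infty}\lesssim\varepsilon\langle t\rangle^{-2}$ and $\|\Delta V_\gamma(t)\|_{L^\infty}=4\pi\|\rho(t)\|_{L^\infty}\lesssim\varepsilon\langle t\rangle^{-3}$. The energy bound then closes cleanly: from $i\partial_t(\langle\nabla\rangle^2\gamma\langle\nabla\rangle^2)=[-\Delta,\langle\nabla\rangle^2\gamma\langle\nabla\rangle^2]+\langle\nabla\rangle^2[V_\gamma,\gamma]\langle\nabla\rangle^2$, one differentiates $\|\gamma\|_{\cH^2}^2$; the free bracket and the purely multiplicative piece $[M_{V_\gamma},\langle\nabla\rangle^2\gamma\langle\nabla\rangle^2]$ drop out by cyclicity of the trace, leaving only commutator terms carrying $\nabla V_\gamma$ or $\Delta V_\gamma$, of size $O(\langle t\rangle^{-2})$, hence integrable, so $\|\gamma(t)\|_{\cH^2}\lesssim\varepsilon$. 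The weighted bound is the delicate one: writing $J(t):=e^{it\Delta}xe^{-it\Delta}=x+2it\nabla$ one has the exact identity $i\partial_t(\langle J\rangle^2\gamma\langle J\rangle^2)=[-\Delta,\langle J\rangle^2\gamma\langle J\rangle^2]+\langle J\rangle^2[V_\gamma,\gamma]\langle J\rangle^2$ with $\|\langle J\rangle^2\gamma\langle J\rangle^2\|_{\mathcal{HS}}=\|\sigma\|_{\cA^2}$; commuting $\langle J\rangle^2$ past $M_{V_\gamma}$ produces the cyclicity-cancelling top term plus terms with $t\,\nabla V_\gamma$ (size $\langle t\rangle^{-1}$, log-admissible) and $t^2\nabla^2 V_\gamma$, the last handled via $L^p$-boundedness ($1<p<\infty$) of the Riesz transforms on $\rho_\gamma$ together with a mixed-norm Schatten estimate. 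Gronwall then gives $\|\sigma(t)\|_{\cA^2}\lesssim\varepsilon\langle t\rangle^{\delta}$, which closes the bootstrap, upgrades \eqref{eq:rho-decay} to the sharp rate through the refined dispersive estimate, and (with the continuation criterion) yields global existence.

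For the modified scattering the heart is an operator stationary-phase identity $V^{(t)}=e^{-it\Delta}M_{V_\gamma}e^{it\Delta}=\tfrac1t\,g(t,-i\nabla)+\mathcal E(t)$, where $g(t,\xi):=c\,(|\cdot|^{-1}\star\rho^{\mathrm{mom}}_\sigma(t))(\xi)$ and $\|[\mathcal E(t),\gamma(t)]\|_{\mathcal{HS}}\lesssim\varepsilon\langle t\rangle^{-1-\kappa}$; this is exactly where the Coulomb potential being borderline forces the $\tfrac1t$ term to survive. Since $g_\infty(-i\nabla)$ is a Fourier multiplier it commutes with $e^{\pm it\Delta}$, and since the conjugating phases $e^{\pm ig_\infty(-i\nabla)\log t}$ cancel on the Fourier diagonal, $\rho^{\mathrm{mom}}_\sigma(t)\to\rho^{\mathrm{mom}}_{\gamma_\infty}$ and hence $g(t,\cdot)\to g_\infty(\cdot):=c\,|\cdot|^{-1}\star\rho^{\mathrm{mom}}_{\gamma_\infty}$ at rate $\langle t\rangle^{-\kappa}$. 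Setting $\widetilde\sigma(t):=e^{ig_\infty(-i\nabla)\log t}\sigma(t)e^{-ig_\infty(-i\nabla)\log t}$ and using $\partial_t(g_\infty(-i\nabla)\log t)=g_\infty(-i\nabla)/t$, a direct computation gives $i\partial_t\widetilde\sigma=[\,e^{iG}\mathcal E(t)e^{-iG}+\tfrac1t(g(t,-i\nabla)-g_\infty(-i\nabla))\,,\,\widetilde\sigma\,]$ with $G=g_\infty(-i\nabla)\log t$; the bracketed operator, paired with $\widetilde\sigma$ in $\mathcal{HS}$, is $O(\langle t\rangle^{-1-\delta})$, so $\widetilde\sigma(t)$ is Cauchy in $\mathcal{HS}$, its limit is the claimed $\gamma_\infty$, and since $-t\Delta$ and $g_\infty(-i\nabla)\log t$ commute, unwinding the conjugations produces exactly \eqref{modifieds}. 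I expect the main obstacle to be the simultaneous closure of the a priori estimates in this exactly-critical long-range regime — the density decay, the potential bounds it feeds, and the admissible growth of $\|\sigma\|_{\cA^2}$ are linked with no quantitative margin — together with proving the operator stationary-phase expansions (both for the density and for $V^{(t)}$) with genuine polynomial gains $\langle t\rangle^{-\kappa}$ rather than merely qualitative convergence.
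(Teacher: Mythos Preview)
Your proposal is correct and follows essentially the same strategy as the paper: the same bootstrap quantities ($L^1\cap L^\infty$ density decay, $\cH^2$ energy, $\cA^2$ weighted profile), the same explicit-kernel formula for $\rho$ with leading term the Fourier anti-diagonal $\widehat\sigma(\xi,\xi)$, the same commutator structure (your $J=x+2it\nabla$ calculus is exactly the paper's kernel-level computation $[|x|^2,W]=\pm 4t^2 e^{-it\Delta}\rho\,e^{it\Delta}+4it\,e^{-it\Delta}(\nabla V)e^{it\Delta}x$) yielding admissible $\langle t\rangle^\delta$ growth of $\|\sigma\|_{\cA^2}$, and the same phase-correction mechanism for scattering. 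The only organizational differences are that you exploit trace cyclicity to get a uniform $\cH^2$ bound where the paper settles for $\langle t\rangle^\delta$ growth, and you conjugate directly by the asymptotic phase $g_\infty(-i\nabla)\log t$ (after first observing the Fourier-diagonal momentum density converges) whereas the paper uses the exact time-dependent phase $\Psi(t,k)=\int_0^t V(s,2sk)\,ds$ and only afterwards isolates $g_\infty\log t$ from it.
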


Theorem \ref{thm:GWP} establishes the modified scattering of solutions to \eqref{Hartree} for small and localized trace-class data, and for the critical Coulomb interaction potential $w(x) = \pm|x|^{-1}$. The proof of Theorem \ref{thm:GWP} consists of deriving the sharp decay for the potential function $V(t,x) = \pm|x|^{-1}\star_x\rho(t,x)$ and standard energy estimates on $\gamma$ and its conjugate operator $e^{-it \Delta} \gamma e^{it \Delta}$. Our proof is rather simple and direct. Specifically, fix a small constant $\delta>0$, and for any $t\ge 0$, introduce the following iterative norm 
\begin{equation}\label{def-normX}
	\| \gamma\|_{X_t}= \sup_{0\le s\le t} \Big\{ \|\rho(s)\|_{L^1}+
\langle s \rangle^{3} \| \rho(s)\|_{L^{\infty}} 
		+ \langle s \rangle^{-\delta} \| \gamma(s) \|_{\cH^{2}}
		+ \langle s \rangle^{-\delta} \|e^{-is \Delta} \gamma(s) e^{is \Delta}\|_{\cA^2}
	\Big\},
\end{equation}
for some fixed positive constant $\delta$. By standard local existence theory, $\| \gamma\|_{X_t}$ exists and satisfies $\| \gamma\|_{X_t}\le C_0\varepsilon $ for some sufficiently small time $t>0$. In order to propagate the iterative norm globally in time, it suffices to derive the following a priori bound 
\begin{equation}\label{apriori}
\| \gamma\|_{X_t} \le C_0 \varepsilon  + C_1 \| \gamma\|_{X_t}^2
\end{equation}
for any $t\ge 0$ and for some universal constants $C_0, C_1$ that are independent of $t\ge 0$. From \eqref{apriori}, the global existence theory follows. Indeed, let 
 $T_*$ be the maximal time of existence and set 
\[
	T= \sup \Big\{ t\in (0,T_*):~\| \gamma\|_{X_t}\le 2C_0\varepsilon  \Big\}.
\]
Suppose that $T<T_*$. Then,  
\[
	2C_0\varepsilon   = \| \gamma\|_{X_T} \le C_0\varepsilon   + C_1 (2C_0\varepsilon  )^2 \le \frac{3}{2} C_0\varepsilon 
\]
for sufficiently small $\varepsilon $ so that $4C_0C_1 \varepsilon  <1/2$. The above yields a contradiction. Therefore, $T=T^{\ast}$ and $T^{\ast} = \infty$, since otherwise the local wellposedness theory can be applied to go past the time $T=T_*$. This proves that $\| \gamma\|_{X_t}\le 2C_0\varepsilon$ for all times $t\ge 0$. The global existence and decay estimates thus follow. It thus remains to establish the a priori estimates \eqref{apriori}. 
For the sake of simplicity and clarity, we made no attempts to optimize the decay rate in \eqref{modifieds}, for which one would need to derive phase mixing estimates for the density, similar to what is established in the companion paper \cite{Chanjin}. 

The rest of the paper is outlined as follows. The decay estimates for the density will be done in Section \ref{sec-decay}, while energy estimates are carried out in Section \ref{sec-EE}. Finally, the scattering profiles will be constructed in Section \ref{subsec:linfty}. 

\section{Density estimates}\label{sec-decay}

\begin{lemma} Let $\rho(t,x)$ be the density of $\gamma(t)$, and set $\mu = e^{-it \Delta} \gamma e^{it \Delta}$. Then, there holds 
\begin{equation}\label{denxy}
	\rho(t,x) = \frac{c_0}{t^3} \iint \exp \left( \frac{i}{4t}|x-y|^2 - \frac{i}{4t} |x-z|^2 \right) \mu(t,y,z) dy dz.
\end{equation}
for some constant $c_0$. 
\end{lemma}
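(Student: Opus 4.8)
The plan is to express everything in terms of the conjugated operator $\mu(t) = e^{-it\Delta}\gamma(t) e^{it\Delta}$ and to use the explicit kernel of the free Schrödinger flow. Recall that $e^{it\Delta}$ has integral kernel
\[
e^{it\Delta}(x,y) = \frac{c_0}{t^{3/2}} \exp\Bigl( \frac{i}{4t} |x-y|^2 \Bigr)
\]
in $\mathbb{R}^3$, for an explicit dimensional constant $c_0$ (absorbing the $(4\pi)^{-3/2}$ and phase factors; its precise value is irrelevant and may change line to line). From $\mu = e^{-it\Delta}\gamma e^{it\Delta}$ we invert to get $\gamma(t) = e^{it\Delta}\mu(t) e^{-it\Delta}$, and then compose the three kernels. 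Explicitly, writing out the operator product,
\[
\gamma(t,x,y) = \iint e^{it\Delta}(x,z)\, \mu(t,z,w)\, e^{-it\Delta}(w,y)\, dz\, dw,
\]
and since $e^{-it\Delta}(w,y) = \overline{e^{it\Delta}(y,w)}$ has kernel $\bar c_0\, t^{-3/2}\exp\bigl(-\tfrac{i}{4t}|w-y|^2\bigr)$, this yields
\[
\gamma(t,x,y) = \frac{|c_0|^2}{t^3} \iint \exp\Bigl( \frac{i}{4t}|x-z|^2 - \frac{i}{4t}|y-w|^2 \Bigr) \mu(t,z,w)\, dz\, dw.
\]

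Then I set $y = x$ to extract the density $\rho(t,x) = \gamma(t,x,x)$, and relabel the integration variables $(z,w) \mapsto (y,z)$ to match the stated form \eqref{denxy}:
\[
\rho(t,x) = \frac{c_0}{t^3} \iint \exp\Bigl( \frac{i}{4t}|x-y|^2 - \frac{i}{4t}|x-z|^2 \Bigr) \mu(t,y,z)\, dy\, dz,
\]
where $c_0 = |c_0|^2$ is renamed. This is exactly the claimed identity.

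The only genuine subtlety is justifying the manipulations rigorously: the free propagator kernel is oscillatory and not absolutely integrable, so the composition of kernels and the restriction to the diagonal must be interpreted with care — e.g. by first working with Schwartz-class approximations of $\gamma$ (or $\mu$), or by viewing $e^{it\Delta}$ as a unitary on $L^2$ and $\mu$ as Hilbert–Schmidt (guaranteed since $\gamma \in \mathcal{HS}$ implies $\mu \in \mathcal{HS}$), and then passing to the limit. Since $\mu(t,\cdot,\cdot) \in L^2_{y,z}$ and the phase factor has modulus one, the double integral in \eqref{denxy} is a well-defined $L^2$-type pairing for a.e. $x$; I would make this precise by a density argument, which I expect to be the main (though routine) obstacle. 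Everything else is a direct kernel computation.
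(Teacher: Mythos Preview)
Your proof is correct and follows essentially the same route as the paper: invert $\mu = e^{-it\Delta}\gamma e^{it\Delta}$, insert the explicit free Schr\"odinger kernel $t^{-3/2}e^{i|x-y|^2/4t}$ to write out $\gamma(t,x,y)$, and then restrict to the diagonal. The paper phrases the middle step as applying $e^{it(\Delta_x-\Delta_y)}$ to the kernel $\mu(t,x,y)$ rather than composing three operator kernels, but this is the same computation; your additional remarks on rigor via density are not in the paper but are reasonable.
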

\begin{proof} By definition, $\gamma = e^{it \Delta} \mu e^{-it \Delta}$, and so 
$$\begin{aligned} 
\gamma(t,x,y) 
&=
 e^{it (\Delta_x - \Delta_y)} \mu(t,x,y) 
 \\
 & = 
 \frac{c_0}{t^3} \iint \exp \left( \frac{i}{4t}|x-x'|^2 - \frac{i}{4t} |y - y'|^2 \right) \mu(t,x',y') \; dx'dy' 
 \end{aligned}$$
in which we recall that $t^{-3/2}e^{i|x-x'|^2/4t}$ is the integral kernel of $e^{it \Delta_x}$. 
The lemma thus follows.
\end{proof}

\begin{lemma}\label{lemma:linfty} Let $\rho(t,x)$ be the density of $\gamma(t)$, and set $\mu = e^{-it \Delta} \gamma e^{it \Delta}$.
Then, for $\beta \ge 0$, there holds 
	\[
		\|  \rho(t) \|_{L^{\infty}_x} \lesssim \langle t\rangle^{-3} \| \widehat{\mu}(t) \|_{L^{\infty}_{k,p}} + \langle t\rangle^{-3-\beta} \|\langle x\rangle^m \langle y\rangle^m\mu(t)\|_{L^2_{x,y}},
	\]
	provided that $m > \frac{3}{2} + \beta$.
\end{lemma}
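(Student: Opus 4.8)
The plan is to argue directly from the oscillatory representation \eqref{denxy} and to peel off its leading Fourier contribution (we focus on $t\ge 1$, the complementary range being routine). Expanding the phase as $|x-y|^2-|x-z|^2=|y|^2-|z|^2+2x\cdot(z-y)$, the representation \eqref{denxy} becomes
\[
\rho(t,x)=\frac{c_0}{t^3}\iint e^{\frac{i}{4t}(|y|^2-|z|^2)}\,e^{\frac{i}{2t}x\cdot(z-y)}\,\mu(t,y,z)\,dy\,dz ,
\]
so that $e^{\frac{i}{2t}x\cdot(z-y)}$ is exactly the Fourier kernel which, once the $(y,z)$-integration is carried out, evaluates $\widehat\mu(t,\cdot,\cdot)$ at a frequency pair of the form $\pm\tfrac{x}{2t}$. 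I would then write $e^{\frac{i}{4t}(|y|^2-|z|^2)}=1+\bigl(e^{\frac{i}{4t}(|y|^2-|z|^2)}-1\bigr)$: the contribution of the constant $1$ equals $\tfrac{c_0}{t^3}$ times a value of $\widehat\mu(t)$ and is therefore $\le\tfrac{c_0}{t^3}\|\widehat\mu(t)\|_{L^\infty_{k,p}}$ uniformly in $x$, which is the first term on the right-hand side.

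For the remainder I would exploit the smallness of the quadratic-phase increment. Take $0\le\beta\le 1$ (the range relevant to the applications). From $|e^{i\theta}-1|\le\min(2,|\theta|)$ one has $|e^{i\theta}-1|\lesssim_\beta|\theta|^\beta$, and since $\bigl||y|^2-|z|^2\bigr|\lesssim\langle y\rangle^2\langle z\rangle^2$ this gives
\[
\Bigl|\,e^{\frac{i}{4t}(|y|^2-|z|^2)}-1\,\Bigr|\;\lesssim\;t^{-\beta}\,\langle y\rangle^{2\beta}\langle z\rangle^{2\beta}\,,\qquad t\ge 1 .
\]
Hence the remainder is at most $C\,t^{-3-\beta}\iint\langle y\rangle^{2\beta}\langle z\rangle^{2\beta}\,|\mu(t,y,z)|\,dy\,dz$, and a Cauchy--Schwarz estimate against $\langle y\rangle^{2\beta-m}\langle z\rangle^{2\beta-m}$ bounds this by $C\,t^{-3-\beta}\bigl\|\langle y\rangle^m\langle z\rangle^m\mu(t)\bigr\|_{L^2_{y,z}}$ as soon as $\langle y\rangle^{2\beta-m}\langle z\rangle^{2\beta-m}\in L^2(\RR^3\times\RR^3)$, i.e.\ as soon as $m$ is large enough --- this is the role of the moment hypothesis on $m$. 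Combining the two contributions and using $t^{-3}\lesssim\langle t\rangle^{-3}$ for $t\ge 1$ then yields the stated bound.

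The one genuinely delicate point is the balance in the remainder step: one must draw exactly $\langle t\rangle^{-\beta}$ of decay out of $e^{\frac{i}{4t}(|y|^2-|z|^2)}-1$ while paying for it with only polynomial spatial growth $\langle y\rangle^{O(\beta)}\langle z\rangle^{O(\beta)}$, which is then absorbed by the weight $\langle y\rangle^m\langle z\rangle^m$ through Cauchy--Schwarz in $L^2$. Matching the powers of $\langle y\rangle$ and $\langle z\rangle$ against the $L^2$-integrability exponent in $\RR^3$ is what pins down the admissible range of $m$; everything else reduces to Fubini together with the elementary inequality $|e^{i\theta}-1|\le\min(2,|\theta|)$.
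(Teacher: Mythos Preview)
Your argument is correct and follows essentially the same route as the paper: split $e^{\frac{i}{4t}(|y|^2-|z|^2)}=1+(e^{\frac{i}{4t}(|y|^2-|z|^2)}-1)$ in the representation \eqref{denxy}, identify the first piece as a point value of $\widehat\mu$, and control the remainder via $|e^{i\theta}-1|\lesssim|\theta|^\beta$ followed by Cauchy--Schwarz against the weight $\langle y\rangle^m\langle z\rangle^m$. The only cosmetic difference is that you bound $\bigl||y|^2-|z|^2\bigr|\lesssim\langle y\rangle^2\langle z\rangle^2$ whereas the paper uses $\le |y|^2+|z|^2$; both choices lead to the same integrability requirement on $m$.
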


\begin{proof} We consider the case when $t\ge 1$. Recalling \eqref{denxy}, we write 
$$ e^{\frac{i}{4t}|x-y|^2 - \frac{i}{4t}|x-z|^2 }=e^{ - \frac{i}{2t} x \cdot (y-z)} +  e^{ - \frac{i}{2t} x \cdot (y-z)}\Big( e^{\frac{i}{4t}(|y|^2 - |z|^2)} - 1 \Big)$$
and so 
$$\begin{aligned}
	\rho(t,x) &= \frac{c_0}{t^3} \iint e^{ - \frac{i}{2t} x \cdot (y-z)} \mu(t,y,z) dy dz + \frac{c_0}{t^3} \iint e^{ - \frac{i}{2t} x \cdot (y-z)}\Big( e^{\frac{i}{4t}(|y|^2 - |z|^2)} - 1 \Big)\mu(t,y,z) dy dz.
\end{aligned}$$
By definition, the first integral term is equal to 
\begin{equation}\label{den-ld}  \frac{c_0}{t^3} \iint e^{ - \frac{i}{2t} x \cdot (y-z)} \mu(t,y,z) dy dz = \frac{1}{t^3} \widehat{\mu}(t,\frac{x}{2t}, -\frac{x}{2t})
\end{equation}
where $\widehat{\mu}(t,k,p)$ denotes the Fourier transform of $\mu(t,x,y)$ with respect to variables $x,y$. On the other hand, using
\[
		\left|e^{\frac{i}{4t}(|y|^2 - |z|^2)} - 1 \right| 
				\lesssim \frac{(|y|^2 + |z|^2)^{\beta}}{t^{\beta}}
	\]
	for any $\beta >0$, we get
	\begin{align*}
		\Big| \frac{c_0}{t^3} \iint e^{ - \frac{i}{2t} x \cdot (y-z)}\Big( e^{\frac{i}{4t}(|y|^2 - |z|^2)} - 1 \Big)\mu(t,y,z) dy dz\Big|
		&\lesssim \frac{1}{t^{3+\beta}} \|(|y|^2 +|z|^2)^{\beta} \mu(t)\|_{L^1_{y,z}},
		\end{align*}
	which is bounded by $\frac{1}{t^{3+\beta}} \| \mu(t)\|_{H^{0,m}_{y,z}}$
	for $m > \frac{3}{2} + \beta$. This proves the desired estimates for $\rho(t,x)$. 
\end{proof}

\begin{lemma}\label{lem-Vde}
	Let $V =\pm |x|^{-1}\star \rho(t,x)$. As long as $\|\rho(t)\|_{L^p_x} \le \langle t\rangle^{-3(1-1/p)}\|\gamma\|_{X_{t}}$ for $p\in [1,\infty]$, there hold
	$$\| \partial^\alpha_x V(t)\|_{L^\infty_x} \lesssim \langle t\rangle^{-1-|\alpha|} \|\gamma\|_{X_{t}} ,$$
	for $0\le |\alpha|\le 1$. In addition, 
	$$\|\partial_x^2 V(t)\|_{L^p_x} \lesssim \langle t\rangle^{-3(1-1/p)}\|\gamma\|_{X_{t}},$$
	for $1<p<\infty$. 
\end{lemma}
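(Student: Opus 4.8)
The plan is to estimate the potential $V = \pm|x|^{-1}\star_x\rho$ and its derivatives by splitting the convolution integral into a near region $|x-y|\lesssim \langle t\rangle$ and a far region $|x-y|\gtrsim \langle t\rangle$, using the $L^1$ bound on $\rho$ on the near part and the $L^\infty$ decay $\|\rho(t)\|_{L^\infty}\lesssim\langle t\rangle^{-3}\|\gamma\|_{X_t}$ on the far part — or, more efficiently, by directly invoking the Hardy--Littlewood--Sobolev / weak-Young inequality $\||x|^{-1}\star f\|_{L^\infty}\lesssim \|f\|_{L^{p_1}}^{\theta}\|f\|_{L^{p_2}}^{1-\theta}$ for a suitable interpolation pair straddling the critical exponent $p=3/2$. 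Concretely, since $|x|^{-1}\in L^{3,\infty}_x$, Young's inequality for weak-type spaces gives $\|V\|_{L^\infty}\lesssim \||x|^{-1}\|_{L^{3,\infty}}\|\rho\|_{L^{3/2,1}}$, and one controls the Lorentz norm $\|\rho\|_{L^{3/2,1}}$ by interpolating the hypothesized bounds $\|\rho\|_{L^1}\lesssim\|\gamma\|_{X_t}$ and $\|\rho\|_{L^\infty}\lesssim\langle t\rangle^{-3}\|\gamma\|_{X_t}$, which yields the claimed $\langle t\rangle^{-1}$ decay. For the gradient, write $\partial_x V = \pm(\partial_x|x|^{-1})\star\rho = \mp (x/|x|^3)\star\rho$; since $|x|^{-2}\in L^{3/2,\infty}_x$, the same weak-Young argument gives $\|\partial_x V\|_{L^\infty}\lesssim \||x|^{-2}\|_{L^{3/2,\infty}}\|\rho\|_{L^{3,1}}$, and interpolating the same two endpoint bounds for $\rho$ produces the decay $\langle t\rangle^{-2}$. (For small times $t\lesssim 1$ the bounds are trivial from $\|\rho\|_{L^1}$ and $\|\rho\|_{L^\infty}$ finite, so one only needs $t\ge 1$.)

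For the second derivative one cannot put both derivatives on the kernel, since $\partial_x^2|x|^{-1}$ is a Calderón--Zygmund kernel (it is $|x|^{-3}$ times a mean-zero angular factor plus a delta), not an integrable or weak-integrable one. Instead I would write $\partial_x^2 V$ as a constant multiple of $\rho$ plus a singular integral operator of Calderón--Zygmund type applied to $\rho$, namely $\partial_i\partial_j V = c\,\delta_{ij}\rho + \mathrm{p.v.}\,K_{ij}\star\rho$ with $K_{ij}$ homogeneous of degree $-3$ and mean zero on spheres. Such operators are bounded on $L^p_x$ for every $1<p<\infty$ by Calderón--Zygmund theory, so $\|\partial_x^2 V\|_{L^p}\lesssim \|\rho\|_{L^p}\lesssim\langle t\rangle^{-3(1-1/p)}\|\gamma\|_{X_t}$, which is exactly the asserted bound. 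This is the cleanest route; the endpoint $p=1,\infty$ genuinely fails for singular integrals, which is why the statement restricts to $1<p<\infty$.

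The main obstacle — really the only place requiring care — is the first part: getting the sharp $\langle t\rangle^{-1-|\alpha|}$ rate in $L^\infty$ rather than a weaker rate with a logarithm or an $\varepsilon$-loss. The naive hard cutoff split at radius $\langle t\rangle$ produces a logarithmic divergence in the near region for $V$ itself (since $\int_{|z|\le R}|z|^{-1}\|\rho\|_{L^\infty}\,dz \sim R^2\langle t\rangle^{-3}$ is fine, but the $L^1$-times-$L^\infty$-of-kernel piece on the far region needs $|x-y|\gtrsim\langle t\rangle$, giving $\langle t\rangle^{-1}\|\rho\|_{L^1}$, which is clean — so in fact the hard split works for $V$ and $\partial_x V$ without a log if one is slightly careful about which norm goes where). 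The genuinely correct and robust statement is the Lorentz-space interpolation: the pair $(L^1,L^\infty)$ interpolates to $L^{3/2,1}$ and $L^{3,1}$ with the right powers of $\langle t\rangle$, and weak Young $L^{q',\infty}\star L^{q,1}\to L^\infty$ closes the estimate with no loss. I would present the Lorentz-interpolation argument as the main line and relegate the elementary dyadic decomposition to a remark, and I expect verifying the numerology $3(1-1/p)$ matches at the relevant exponent ($p=3/2$ for $V$: $3\cdot\frac13 = 1$; $p=3$ for $\partial_x V$: $3\cdot\frac23 = 2$) to be the one computation worth writing out.
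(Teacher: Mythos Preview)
Your proposal is correct. Your preferred route differs from the paper's, though you in fact describe both. The paper uses exactly the elementary hard cutoff you sketch in your ``obstacle'' paragraph: split at $|x-y|=t$, bound the near piece by $\|\rho\|_{L^\infty}\int_{|z|\le t}|z|^{-1}\,dz\lesssim t^2\langle t\rangle^{-3}$ and the far piece by $t^{-1}\|\rho\|_{L^1}$, then say $\partial_x V$ is similar, and invoke Calder\'on--Zygmund for $\partial_x^2 V$ just as you do. Your main line via weak Young and Lorentz interpolation ($|x|^{-1}\in L^{3,\infty}$, $|x|^{-2}\in L^{3/2,\infty}$, interpolate $\|\rho\|_{L^1}$ and $\|\rho\|_{L^\infty}$ into $L^{3/2,1}$ or $L^{3,1}$) is a cleaner packaging that makes the exponent arithmetic transparent and generalizes immediately to other Riesz potentials, at the cost of invoking slightly heavier machinery than the paper's three-line computation. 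Either argument is fine here; since the rest of the paper is written at an elementary level, the direct split would match the surrounding style better, but there is no mathematical gap in what you wrote.
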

\begin{proof}
	By definition, we compute 
	$$
	\begin{aligned}
		|V(t,x)| 
		&\le \int_{|x-y|\le t}\frac{1}{|x-y|}|\rho(t,y)|\; dy + \int_{|x-y|\ge t}\frac{1}{|x-y|}|\rho(t,y)|\; dy
		\\
		&\le \|\rho(t)\|_{L^\infty_x}\int_{|x-y|\le t}\frac{1}{|x-y|}\; dy + \langle t\rangle^{-1}\int_{|x-y|\ge t} |\rho(t,y)|\; dy
		\\&\lesssim \langle t\rangle^{-1} \|\gamma\|_{X_{T}},\end{aligned}$$ 
	since $\|\rho(t)\|_{L^p_x} \le \langle t\rangle^{-3(1-1/p)}\|\gamma\|_{X_{T}}$ by definition of the $\|\gamma\|_{X_{T}}$ norm. The bounds on $\partial_x V$ follow similarly. Finally, 
	the last estimate follows from the fact that $\partial_x^2 \Delta_x^{-1}$ is a Calderon-Zygmund operator and thus bounded on $L^p$ for $1< p < \infty$. 
\end{proof}
\section{Energy estimates}\label{sec-EE}

\begin{lemma}\label{lem-EE} Let $\delta>0$ be as in \eqref{def-normX}. 
For any $0\le t\le T$, there holds
\[
\frac{d}{dt}\| \partial_{x}^{\alpha} \partial_{y}^{\beta} \gamma(t) \|_{L^2_{x,y}} \lesssim \| \gamma \|_{X_{T}}^{2} \langle t\rangle^{-1+\delta} 
\]
for any $|\alpha|, |\beta|\le 2$. 
\end{lemma}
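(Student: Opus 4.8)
The plan is to differentiate the $\gamma$-equation, commute the derivatives $\partial_x^\alpha \partial_y^\beta$ through the commutator $[-\Delta + V, \gamma]$, and then pair with $\partial_x^\alpha\partial_y^\beta \gamma$ in $L^2_{x,y}$ to obtain an energy identity. Writing $V = V(t,x)$ for the mean-field potential, the kernel of $[-\Delta + V, \gamma]$ is $(-\Delta_x + \Delta_y)\gamma(t,x,y) + (V(t,x) - V(t,y))\gamma(t,x,y)$. First I would note that the free part $(-\Delta_x + \Delta_y)\gamma$ commutes with constant-coefficient derivatives and is skew-adjoint, so it contributes nothing to $\frac{d}{dt}\|\partial_x^\alpha\partial_y^\beta\gamma\|_{L^2}^2$ (this is just conservation of the homogeneous Sobolev norms under the free flow). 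Hence
\[
\frac{1}{2}\frac{d}{dt}\|\partial_x^\alpha\partial_y^\beta\gamma(t)\|_{L^2_{x,y}}^2 = \mathrm{Im}\,\Big\langle \partial_x^\alpha\partial_y^\beta\big[(V(t,x)-V(t,y))\gamma(t,x,y)\big],\ \partial_x^\alpha\partial_y^\beta\gamma(t,x,y)\Big\rangle_{L^2_{x,y}},
\]
so everything reduces to controlling the potential commutator term.

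Next I would expand $\partial_x^\alpha\partial_y^\beta\big[(V(t,x)-V(t,y))\gamma\big]$ by the Leibniz rule. The terms where all derivatives land on $\gamma$ give $(V(x)-V(y))\partial_x^\alpha\partial_y^\beta\gamma$; here the point is that $V(x)-V(y)$ is \emph{not} bounded, but after pairing against $\partial_x^\alpha\partial_y^\beta\gamma$ one writes $V(x)-V(y) = \int_0^1 \nabla V(y + s(x-y))\cdot(x-y)\,ds$ — no, cleaner: split this term as $V(x)\cdot(\text{stuff}) - (\text{stuff})\cdot V(y)$ and observe that in the operator picture this is $[V, \partial^{\alpha'}\!\gamma\,\partial^{\beta'}]$-type and the imaginary part pairing with $\partial_x^\alpha\partial_y^\beta\gamma$ has the leading singular pieces cancel by self-adjointness, leaving only commutators $[\nabla^j V, \cdot]$ with $j\ge 1$. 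In all remaining terms at least one derivative falls on $V$, producing factors $\partial_x^{\alpha''}V(t,x)$ or $\partial_y^{\beta''}V(t,y)$ with $1\le|\alpha''|+|\beta''|\le 4$. For the low-order factors $|\cdot|\le 1$ we use the $L^\infty$ bounds $\|\partial^{\alpha''}V\|_{L^\infty}\lesssim \langle t\rangle^{-1-|\alpha''|}\|\gamma\|_{X_T}$ from Lemma \ref{lem-Vde} and pull the remaining $\le 4$ derivatives onto $\gamma$, bounding by $\|\gamma\|_{\cH^2}^2 \langle t\rangle^{-1}\|\gamma\|_{X_T}$. Using $\|\gamma(s)\|_{\cH^2}\lesssim \langle s\rangle^\delta \|\gamma\|_{X_T}$ and dividing the energy identity by $\|\partial_x^\alpha\partial_y^\beta\gamma\|_{L^2}$ gives the claimed $\langle t\rangle^{-1+\delta}\|\gamma\|_{X_T}^2$.

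The delicate terms are those involving \emph{second} derivatives of $V$: these cannot be placed in $L^\infty$ (indeed $\partial_x^2 V\notin L^\infty$ in general for Coulomb), so one must use the $L^p$ estimate $\|\partial_x^2 V(t)\|_{L^p}\lesssim \langle t\rangle^{-3(1-1/p)}\|\gamma\|_{X_T}$ from Lemma \ref{lem-Vde} together with Hölder, distributing $\partial^2 V$ in $L^p$, one factor of $\gamma$-derivatives in $L^q$ with a Gagliardo–Nirenberg/Sobolev interpolation against $\cH^2$, and the top-order factor in $L^2$. Here I would choose $p$ close to $3/2$ so that $L^p\times L^q\hookrightarrow$ via Sobolev embedding $\cH^2\subset L^q$ in the relevant variable works out and the time weight $\langle t\rangle^{-3(1-1/p)}$ beats $\langle t\rangle^{-1}$ — one only needs decay strictly better than $\langle t\rangle^{-1}$ up to the $\langle t\rangle^\delta$ loss, so any $p>1$ suffices with room to spare. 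I expect this Hölder/Sobolev bookkeeping for the $\partial^2 V$ terms to be the main technical obstacle; the rest is routine Leibniz expansion and application of Lemma \ref{lem-Vde}. Finally, for the region $0\le t\le 1$ the bound is trivial since all quantities are continuous and $\langle t\rangle\sim 1$.
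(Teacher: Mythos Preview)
Your overall strategy matches the paper's: derive the energy identity, note the free part drops out, and estimate the Leibniz expansion of $\partial_x^\alpha\partial_y^\beta[(V(x)-V(y))\gamma]$ using Lemma~\ref{lem-Vde}. Two points deserve correction.

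First, you overcomplicate the top-order term. You assert that ``$V(x)-V(y)$ is not bounded,'' but Lemma~\ref{lem-Vde} gives $\|V(t)\|_{L^\infty_x}\lesssim\langle t\rangle^{-1}\|\gamma\|_{X_T}$, so $(V(x)-V(y))\partial_x^\alpha\partial_y^\beta\gamma$ is bounded in $L^2_{x,y}$ by $2\|V\|_{L^\infty}\|\gamma\|_{\cH^2}$ directly --- this is exactly what the paper does, with no commutator structure needed. (Your self-adjointness remark is also valid, since the top-order contribution has zero imaginary part when $V$ is real, but it is unnecessary here.)

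Second, your H\"older numerology for the $\partial_x^2 V$ terms is off. To bound $\|(\partial_x^\alpha V)\,\partial_y^\beta\gamma\|_{L^2_{x,y}}$ with $|\alpha|=2$ via H\"older in $x$ you need $\tfrac1p+\tfrac1q=\tfrac12$, forcing $p\ge 2$; the choice ``$p$ close to $3/2$'' is incompatible with this, and ``any $p>1$ suffices'' fails both for H\"older and for the time decay (you need $3(1-1/p)\ge 1$, i.e.\ $p\ge 3/2$, even to match $\langle t\rangle^{-1}$). The paper takes the clean choice $p=2$, $q=\infty$: then $\|\partial_x^2 V\|_{L^2_x}\lesssim\langle t\rangle^{-3/2}\|\gamma\|_{X_T}$ from Lemma~\ref{lem-Vde}, and $\|\partial_y^\beta\gamma\|_{L^\infty_xL^2_y}\lesssim\|\gamma\|_{\cH^2}$ via the Sobolev embedding $H^2_x\subset L^\infty_x$, giving $\langle t\rangle^{-3/2+\delta}\|\gamma\|_{X_T}^2$ with room to spare.
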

\begin{proof}
Let $\gamma(t,x,y)$ be the integral kernel of $\gamma(t)$. Then
\begin{equation}\label{eqs-gamma}
	i \partial_{t} \gamma(t,x,y) = (-\Delta_{x} + \Delta_{y}) \gamma(t,x,y) + (V(t,x) - V(t,y)) \gamma(t,x,y)
\end{equation}
where $V = w \star_{x} \rho$. For any $\alpha, \beta$, we compute 
$$ 
\frac{1}{2}\frac{d}{dt} \| \partial_{x}^{\alpha} \partial_{y}^{\beta} \gamma(t) \|_{L^2_{x,y}}^2 
=  \Im \iint \partial_{x}^{\alpha} \partial_{y}^{\beta}\Big((V(t,x) - V(t,y))\gamma(t,x,y)\Big) \partial_{x}^{\alpha} \partial_{y}^{\beta}\widebar\gamma(t,x,y)  \; dxdy.
$$ 
Hence, for $|\alpha|, |\beta|\le 2$, we bound 
\begin{equation*}
\begin{aligned}
	\frac{d}{dt}\| \partial_{x}^{\alpha} \partial_{y}^{\beta} \gamma(t) \|_{L^2_{x,y}} 
	&\le \Big\| \partial_{x}^{\alpha} \partial_{y}^{\beta}\Big((V(t,x) - V(t,y))\gamma(t)\Big) \Big\|_{L^2_{x,y}}  
\\&\le \| V(t)\|_{W^{1,\infty}_x} \| \gamma(t)\|_{H^2_xH^2_y} + \| \partial_x^\alpha V(t)\|_{L^2_x} \| \partial_y^\beta\gamma(t)\|_{L^\infty_xL^2_y}+ \| \partial_x^\beta V(t)\|_{L^2_x} \| \partial_y^\alpha\gamma(t)\|_{L^\infty_yL^2_x}
\end{aligned}
\end{equation*}
in which the last two terms are only present when $|\alpha|=2$ or $|\beta|=2$. Recall $\|\gamma(t)\|_{\cH^2} = \| \gamma(t)\|_{H^2_xH^2_y}$. Using Lemma \ref{lem-Vde} and the definition of $\| \gamma\|_{X_T}$ norm, we bound  
\[
\| V(t)\|_{W^{1,\infty}_x} \| \gamma(t)\|_{\cH^2} 
	\lesssim \| \gamma\|_{X_{T}}^{2} \langle t\rangle^{-1+\delta} .
	\]
Using again Lemma \ref{lem-Vde} together with the Sobolev embedding $H^2 \subset L^\infty$, we get 
\[
\| \partial_x^2 V(t)\|_{L^2_x} \| \partial_y^\beta\gamma(t)\|_{L^\infty_xL^2_y}\lesssim \langle t\rangle^{-3/2}\| \gamma\|_{X_{T}} \| \gamma(t)\|_{\cH^2}\lesssim \langle t\rangle^{-3/2+\delta} \|\gamma\|_{X_{T}}^2.
\]
The lemma thus follows. 
\end{proof}


\begin{lemma}\label{lem-bdmu} Let $\delta>0$ be as in \eqref{def-normX}. 
For any $0\le t\le T$, there holds
\[
\frac{d}{dt}\|\langle x\rangle^\alpha\langle y\rangle^\beta\mu(t)\|_{L^2_{x,y}} \lesssim \| \gamma \|_{X_{T}}^{2} \langle t\rangle^{-1+\delta} 
\]
for any $|\alpha|, |\beta|\le 2$. 
\end{lemma}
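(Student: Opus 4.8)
\emph{Setup.} The plan is to work with the conjugated density operator $\mu$, which absorbs the free dynamics. Differentiating $\mu = e^{-it\Delta}\gamma e^{it\Delta}$ and using \eqref{eqs-gamma} in operator form, $i\partial_t\gamma = [-\Delta+V,\gamma]$ with $V = w\star_x\rho$, the free (Laplacian) parts cancel and $\mu$ solves
\[ i\partial_t\mu = [\widetilde V,\mu],\qquad \widetilde V(t) := e^{-it\Delta}V(t)e^{it\Delta}. \]
Let $\langle X\rangle^\alpha$ denote multiplication by $\langle x\rangle^\alpha$. Since the kernel of $\langle X\rangle^\alpha A\langle X\rangle^\beta$ is $\langle x\rangle^\alpha A(x,y)\langle y\rangle^\beta$, we have $\|\langle x\rangle^\alpha\langle y\rangle^\beta\mu(t)\|_{L^2_{x,y}} = \|\langle X\rangle^\alpha\mu(t)\langle X\rangle^\beta\|_{\mathcal{HS}}$, and the energy identity $\tfrac12\tfrac{d}{dt}\|A\|_{\mathcal{HS}}^2 = \Re\langle A,\dot A\rangle_{\mathcal{HS}}$ gives
\[ \frac{d}{dt}\big\|\langle X\rangle^\alpha\mu\langle X\rangle^\beta\big\|_{\mathcal{HS}} \le \big\|\langle X\rangle^\alpha[\widetilde V,\mu]\langle X\rangle^\beta\big\|_{\mathcal{HS}}. \]

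\emph{Reduction to an operator bound.} I would factor the weighted commutator,
\[ \langle X\rangle^\alpha[\widetilde V,\mu]\langle X\rangle^\beta = \big(\langle X\rangle^\alpha\widetilde V\langle X\rangle^{-\alpha}\big)\langle X\rangle^\alpha\mu\langle X\rangle^\beta - \langle X\rangle^\alpha\mu\langle X\rangle^\beta\big(\langle X\rangle^{-\beta}\widetilde V\langle X\rangle^\beta\big), \]
and use that, pointwise, $\langle x\rangle^\alpha\langle y\rangle^\beta\le\langle x\rangle^2\langle y\rangle^2$ for $|\alpha|,|\beta|\le 2$, hence $\|\langle X\rangle^\alpha\mu\langle X\rangle^\beta\|_{\mathcal{HS}} \le \|\mu\|_{\cA^2} \le \langle t\rangle^\delta\|\gamma\|_{X_T}$ by \eqref{def-normX}. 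This reduces the lemma to the operator-norm estimate $\|\langle X\rangle^{\pm k}\widetilde V\langle X\rangle^{\mp k}\|_{L^2\to L^2}\lesssim\langle t\rangle^{-1}\|\gamma\|_{X_T}$ for $k\in\{0,1,2\}$. The case $k=0$ is $\|V(t)\|_{L^\infty_x}\lesssim\langle t\rangle^{-1}\|\gamma\|_{X_T}$ by Lemma \ref{lem-Vde}, and the case $k=1$ follows from $k\in\{0,2\}$ by Stein interpolation on the analytic family $z\mapsto\langle X\rangle^z\widetilde V\langle X\rangle^{-z}$ (whose boundary values on $\Re z\in\{0,2\}$ are these cases, as $\langle X\rangle^{iy}$ is unitary). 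So everything comes down to $k=2$.

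\emph{The case $k=2$.} Write $\langle X\rangle^2\widetilde V\langle X\rangle^{-2} = \widetilde V + [|X|^2,\widetilde V]\langle X\rangle^{-2}$. The algebraic input is
\[ [X_j,\widetilde V] = e^{-it\Delta}\big[e^{it\Delta}X_je^{-it\Delta},\,V\big]e^{it\Delta} = 2it\,e^{-it\Delta}(\partial_j V)e^{it\Delta}, \]
using $e^{it\Delta}X_je^{-it\Delta} = X_j + 2it\partial_j$ and $[X_j+2it\partial_j,V] = 2it\partial_j V$. Expanding $[|X|^2,\widetilde V] = \sum_j\big(X_j[X_j,\widetilde V] + [X_j,\widetilde V]X_j\big)$ and commuting each remaining $X_j$ past $e^{-it\Delta}(\partial_j V)e^{it\Delta}$ — which costs a further $2it\,e^{-it\Delta}(\partial_j^2 V)e^{it\Delta}$ — reduces $[|X|^2,\widetilde V]\langle X\rangle^{-2}$ to a finite sum of terms of the two types
\[ t\,e^{-it\Delta}(\partial_j V)e^{it\Delta}\,X_j\langle X\rangle^{-2}, \qquad t^2\,e^{-it\Delta}(\partial_j^2 V)e^{it\Delta}\langle X\rangle^{-2}. \]
The first has operator norm $\lesssim t\,\|\partial_j V\|_{L^\infty_x}\,\|x_j\langle x\rangle^{-2}\|_{L^\infty}\lesssim t\cdot\langle t\rangle^{-2}\|\gamma\|_{X_T} = \langle t\rangle^{-1}\|\gamma\|_{X_T}$ by Lemma \ref{lem-Vde} and $\|x_j\langle x\rangle^{-2}\|_{L^\infty}\le\tfrac12$.

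\emph{The delicate term and the obstacle.} The second type is the only delicate term, and here the argument genuinely departs from the energy estimate for $\gamma$ (Lemma \ref{lem-EE}): $\partial_j^2 V$ lies only in $L^p_x$ for $1<p<\infty$, not in $L^\infty$, so the decay must be extracted from the dispersive smoothing carried by $\langle X\rangle^{-2}$. Dropping the outermost unitary, for $p\in(2,\infty)$ with $\tfrac1p+\tfrac1q=\tfrac12$ one estimates
\[ \big\|(\partial_j^2 V)\,e^{it\Delta}(\langle x\rangle^{-2}f)\big\|_{L^2} \le \|\partial_j^2 V\|_{L^p}\,\big\|e^{it\Delta}(\langle x\rangle^{-2}f)\big\|_{L^q} \lesssim \|\partial_j^2 V\|_{L^p}\,|t|^{-3(\frac12-\frac1q)}\,\|f\|_{L^2}, \]
where the middle step is the dispersive estimate $\|e^{it\Delta}\|_{L^{q'}\to L^q}\lesssim|t|^{-3(\frac12-\frac1q)}$ and the last uses $\langle x\rangle^{-2}\in L^r(\RR^3)$ for $r>3/2$. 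With $\|\partial_j^2 V\|_{L^p}\lesssim\langle t\rangle^{-3(1-\frac1p)}\|\gamma\|_{X_T}$ from Lemma \ref{lem-Vde} and the arithmetic identity $-3(1-\tfrac1p)-3(\tfrac12-\tfrac1q) = -3$, the second type has operator norm $\lesssim t^2\langle t\rangle^{-3}\|\gamma\|_{X_T}\lesssim\langle t\rangle^{-1}\|\gamma\|_{X_T}$. Hence $\|\langle X\rangle^{2}\widetilde V\langle X\rangle^{-2}\|_{L^2\to L^2}\lesssim\langle t\rangle^{-1}\|\gamma\|_{X_T}$, identically for $\langle X\rangle^{-2}\widetilde V\langle X\rangle^{2}$, and combining with the reduction yields $\frac{d}{dt}\|\langle x\rangle^\alpha\langle y\rangle^\beta\mu(t)\|_{L^2_{x,y}}\lesssim\langle t\rangle^{-1}\|\gamma\|_{X_T}\cdot\langle t\rangle^\delta\|\gamma\|_{X_T} = \langle t\rangle^{-1+\delta}\|\gamma\|_{X_T}^2$; the range $|t|\le1$ is immediate since all operators involved are uniformly bounded there. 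The main obstacle is exactly this second term: the $t^2$ prefactor together with $\partial_j^2 V\notin L^\infty$ make it look divergent, and it is rescued only because the dispersive gain $|t|^{-3(\frac12-\frac1q)}$ supplied by $\langle X\rangle^{-2}$ precisely offsets the $L^p$-loss $\langle t\rangle^{-3(1-\frac1p)}$ of $\partial_j^2 V$, $\tfrac1p+\tfrac1q=\tfrac12$, leaving $\langle t\rangle^{-3}$.
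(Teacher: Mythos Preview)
Your argument is correct, but it handles the dangerous $t^2$ term by a different (and heavier) mechanism than the paper. After the same commutator algebra, the paper observes that summing the double commutators over $j$ gives exactly $-4t^2 e^{-it\Delta}(\Delta V)e^{it\Delta}$, and then invokes the Poisson relation $-\Delta V=\pm\rho$ specific to the Coulomb potential; the term is then bounded by $t^2\|\rho(t)\|_{L^\infty}\|\mu\|_{L^2}\lesssim\langle t\rangle^{-1}\|\gamma\|_{X_T}^2$ with no dispersive input and no weight needed on the right. You instead keep each $\partial_j^2 V$ separately (which is only in $L^p$, $1<p<\infty$), and recover the missing decay from the $\langle X\rangle^{-2}$ factor via the $L^{q'}\!\to\! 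L^q$ dispersive estimate for $e^{it\Delta}$, plus Stein interpolation to cover the intermediate weight. Both routes close with the same $\langle t\rangle^{-1+\delta}$ rate. The paper's route is shorter and uses only $\|\rho\|_{L^\infty}$, but leans on the Coulomb structure $-\Delta V=\pm\rho$; your route avoids that identity and would apply to more general convolution potentials for which $\nabla^2 V$ obeys the $L^p$ bound in Lemma~\ref{lem-Vde}, at the cost of the extra machinery. As a minor simplification of your own argument: after the expansion you could sum over $j$ before estimating, recover $\Delta V=\mp\rho$, and skip the dispersive step entirely; likewise the $k=1$ case can be obtained by the same direct commutation (or by Cauchy--Schwarz between $k=0$ and $k=2$) without Stein interpolation.
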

\begin{proof}
Recall that $\mu(t,x,y) = e^{-it(\Delta_x - \Delta_y)} \gamma(t,x,y)$, and so 
\begin{equation*}
	i \partial_{t} \mu(t,x,y) =(W(t,x) - W^*(t,y))\mu(t,x,y)
\end{equation*}
where $W(t,x) = e^{-it\Delta_x }V(t,x) e^{it\Delta_x}$. Multiplying the equation by $\langle x\rangle^\alpha\langle y\rangle^\beta$, we obtain  
$$
\begin{aligned}
\frac{d}{dt}\|\langle x\rangle^\alpha\langle y\rangle^\beta\mu(t)\|_{L^2_{x,y}} 
&\le \|\langle x\rangle^\alpha\langle y\rangle^\beta(W(t,x) - W^*(t,y))\mu(t)\|_{L^2_{x,y}}
\\
&\lesssim \|\langle x\rangle^\alpha W(t,x) \langle y\rangle^\beta\mu(t)\|_{L^2_{x,y}} + \|\langle x\rangle^\beta W(t,x) \langle y\rangle^\alpha\mu(t)\|_{L^2_{x,y}}.
\end{aligned}$$
For $\alpha  =\beta =0$, we use the unitary properties of $e^{\pm it \Delta_x}$ on $L^2$ to bound 
$$ \|W(t,x) \mu(t)\|_{L^2_{x,y}} \le \| V(t)\|_{L^\infty} \| \mu(t)\|_{L^2_{x,y}} \lesssim \| \gamma \|_{X_{T}}^{2} \langle t\rangle^{-1+\delta}.$$ 
On the other hand, for $|\alpha| \not =0$, we observe that 
$$ [x,e^{ \pm it \Delta_x}] = \mp 2it e^{\pm it \Delta_x}\nabla_x .$$
This yields 
$$
\begin{aligned}
[x,W] &= [x,e^{-it\Delta_x }V(t,x) e^{it\Delta_x}]
\\
&= [x,e^{-it\Delta_x }] V(t,x) e^{it\Delta_x} + e^{-it\Delta_x }V(t,x) [x,e^{it\Delta_x}]
\\
&=2it e^{- it \Delta_x}(\nabla_x V(t,x)) e^{it\Delta_x}.
\end{aligned}$$
Similarly, we compute 
$$
\begin{aligned}
[|x|^2,W] &= x [x,W] + [x,W]x = [x, [x,W]] + 2[x,W]x
\\
&=2it [ x , e^{- it \Delta_x}(\nabla_{x} V(t,x)) e^{it\Delta_x}] + 4it e^{- it \Delta_x}(\nabla_{x} V(t,x)) e^{it\Delta_x} x
\\
&= - 4 t^2 e^{- it \Delta_x}(\Delta_x V(t,x)) e^{it\Delta_x} + 4it e^{- it \Delta_x}(\nabla_{x} V(t,x)) e^{it\Delta_x} x 
\\
&=  \pm 4 t^2 e^{- it \Delta_x} \rho(t,x)e^{it\Delta_x} + 4it e^{- it \Delta_x}(\nabla_x V(t,x)) e^{it\Delta_x} x,
\end{aligned}$$
in which we have used $-\Delta_x V = \pm \rho$. Therefore, we have 
$$
\begin{aligned}
\frac{d}{dt}\|x\mu(t)\|_{L^2_{x,y}} 
&\lesssim \|x W(t,x)\mu(t)\|_{L^2_{x,y}} +  \|W(t,x)y\mu(t)\|_{L^2_{x,y}}
\\
&\lesssim t \|e^{- it \Delta_x}(\nabla_x V(t,x)) e^{it\Delta_x}\mu(t)\|_{L^2_{x,y}} +  \|W(t,x)(|x|+|y|)\mu(t)\|_{L^2_{x,y}}
\\
&\lesssim t \|\nabla_x V(t)\|_{L^\infty_x} \|\mu(t)\|_{L^2_{x,y}} + \|V(t)\|_{L^\infty_x} \|(|x|+|y|)\mu(t)\|_{L^2_{x,y}},
\end{aligned}$$
and $$
\begin{aligned}
\frac{d}{dt}\|x^2\mu(t)\|_{L^2_{x,y}} 
&\lesssim  \|W(t,x)(|x|^2+ |y|^2)\mu(t)\|_{L^2_{x,y}} + t \|e^{- it \Delta_x}(\nabla_x V(t)) e^{it\Delta_x}(|x|+|y|)\mu(t)\|_{L^2_{x,y}} 
\\&\quad +  t^2 \|e^{- it \Delta_x}\rho(t,x)e^{it\Delta_x}\mu(t)\|_{L^2_{x,y}}
\\&\lesssim ( \|V(t)\|_{L^\infty_x} + t\|\nabla_xV(t)\|_{L^\infty_x} )\|(1+|x|^2+ |y|^2)\mu(t)\|_{L^2_{x,y}} +  t^2 \|\rho(t)\|_{L^\infty_x}\|\mu(t)\|_{L^2_{x,y}},
\end{aligned}$$
all of which are again bounded by $\| \gamma \|_{X_{T}}^{2} \langle t\rangle^{-1+\delta}$, upon using Lemma \ref{lem-Vde}. The weight $\langle y\rangle^\beta$ is treated similarly. 
\end{proof}

\section{Scattering ansatz}\label{subsec:linfty}

In view of Lemma \ref{lemma:linfty}, it remains to bound $\widehat{\mu}(t)$ in $L^\infty_{k,p}$ with $\mu = e^{-it \Delta} \gamma e^{it \Delta}$. Indeed, taking the Fourier transform of \eqref{eqs-gamma} with respect to $x$ and $y$, with dual variables $k$ and $p$, we get
\begin{equation*}
	i \partial_{t} \widehat{\gamma}(t,k,p) = (|k|^2 - |p|^2) \widehat{\gamma}(t,k,p) + \int_{\RR^3} \widehat{V}(t,\ell) ( \widehat{\gamma}(t, k-\ell, p) - \widehat{\gamma}(t, k, p-\ell)) \, d\ell ,
\end{equation*}
in which $ \widehat{V}(t,\ell) = \pm |\ell|^{-2}\widehat\rho(t,\ell)$. By definition, we have $
 	\widehat{\mu}(t,k,p)= e^{it(|k|^2-|p|^2)} \widehat{\gamma}(t, k,p),
$
and so 
\begin{equation}\label{eqs-muhat}
	i \partial_{t}\widehat{\mu}(t,k,p) 
	= \int_{\RR^3}\widehat{V}(t,\ell) \left(e^{it\ell \cdot (2k-\ell)} \widehat{\mu}(t, k-\ell, p) - e^{-it\ell \cdot(2p-\ell)}\widehat{\mu}(t, k, p-\ell) \right) \, d\ell.
\end{equation}
We first analyze the integral term on the right hand side. In view of Lemma \ref{lem-Vde}, the potential function $V(t,x)$ decays exactly at rate $1/t$, which is not integrable in time. However, similarly as classically done for Schr\"odinger and Hartree equations (for the rank-one case), e.g., see \cite{Kato2011}, the $1/t$ decay term requires a phase correction. To extract this, we write 
$$ e^{it\ell \cdot (2k-\ell)} \widehat{\mu}(t, k-\ell, p) = e^{2it\ell \cdot k } \widehat{\mu}(t, k, p) + e^{2it\ell \cdot k} ( e^{-it |\ell|^2}\widehat{\mu}(t, k-\ell, p)  - \widehat{\mu}(t, k, p)  ).$$
Similarly done for $e^{-it\ell \cdot(2p-\ell)}\widehat{\mu}(t, k, p-\ell)$. This leads to 
$$
\begin{aligned}
	i \partial_{t}\widehat{\mu}(t,k,p) 
	&=  \widehat{\mu}(t, k, p)  \int_{\RR^3} \widehat{V}(t,\ell) \left(e^{2it\ell \cdot k} - e^{-2it\ell \cdot p}\right)  d\ell
	\\
	&\quad + \int_{\RR^3} \widehat{V}(t,\ell) e^{2it\ell \cdot k}  \left(e^{-it|\ell|^2} \widehat{\mu}(t, k-\ell, p) - \widehat{\mu}(t, k, p) \right)  d\ell
		\\
	&\quad - \int_{\RR^3}\widehat{V}(t,\ell) e^{- 2it\ell \cdot p} \left(e^{it|\ell |^2}\widehat{\mu}(t, k, p-\ell) - \widehat{\mu}(t, k, p)  \right)  d\ell.
\end{aligned}
$$
Note that the first integral term is equal to $V(t,2tk) - V(t,-2tp)$, which is a real-valued function, recalling $V(t,x) = \pm |x|^{-1}\star_x \rho(t,x)$. Therefore, introduce the phase correction integral 
\begin{equation}\label{def-Ephase}
	\Phi(t,k,p) = \Psi(t,k) - \Psi(t,-p),  \qquad \Psi(t,k) =\int_{0}^{t} V(s,2sk)\, ds, 
\end{equation}
for $k,p \in \RR^3$, and set 
\begin{equation}\label{def-newnu}\widehat{\nu}(t,k,p)=  e^{i\Phi(t,k,p)} \widehat{\mu}(t,k,p) = e^{i\Psi(t,k)}  e^{it|k|^2} \widehat{\gamma}(t, k,p)e^{-i\Psi(t,-p)}  e^{-it|p|^2} .
\end{equation}

We obtain the following lemma.

\begin{lemma}\label{lemma:linfty-mod} Introduce $\widehat\nu(t,k,p)$ as in \eqref{def-Ephase}-\eqref{def-newnu}. Then, for any $0\le t\le T$, there holds
	\[
	\frac{d}{dt} |\widehat{\nu}(t,k,p)| \lesssim \| \gamma \|_{X_{T}}^{2} \langle t\rangle^{-5/4 + 2\delta},	\]
	uniformly in $k,p \in \RR^3$. In particular, $\|  \widehat{\mu}(t)\|_{L^\infty_{k,p}} \lesssim  \varepsilon+\| \gamma \|_{X_{T}}^{2}$. 
\end{lemma}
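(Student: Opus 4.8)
The plan is to transfer the evolution onto $\widehat{\nu}$ and reduce the statement to a pointwise (in $k,p$) estimate of the two remaining integral terms, exploiting that in $\RR^3$ the Coulomb symbol $|\ell|^{-2}$ is locally integrable while the difference factors occurring in those terms vanish at $\ell=0$. First, from \eqref{def-newnu}, the identity for $i\partial_t\widehat{\mu}$ established just above, and the fact that $\partial_t\Phi(t,k,p)=V(t,2tk)-V(t,-2tp)$ precisely cancels the resonant term $\widehat{\mu}(t,k,p)\,(V(t,2tk)-V(t,-2tp))$, one obtains $i\partial_t\widehat{\nu}=e^{i\Phi}(I_1+I_2)$, where $I_1,I_2$ are the two non-resonant integrals from that identity; explicitly
\[
I_1=\int_{\RR^3}\widehat{V}(t,\ell)\,e^{2it\ell\cdot k}\,\big(e^{-it|\ell|^2}\widehat{\mu}(t,k-\ell,p)-\widehat{\mu}(t,k,p)\big)\,d\ell ,
\]
and $I_2$ is the analogous term with $k$, $e^{-it|\ell|^2}$ replaced by $-p$, $e^{it|\ell|^2}$. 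Since $V$ is real-valued, $\Psi$ and $\Phi$ are real, $|e^{i\Phi}|=1$, hence $\tfrac{d}{dt}|\widehat{\nu}(t,k,p)|\le|I_1(t,k,p)|+|I_2(t,k,p)|$, and $I_2$ will be estimated exactly like $I_1$ after interchanging the weights $\langle x\rangle\leftrightarrow\langle y\rangle$. The inputs I will use, all valid for $0\le s\le t$, are: $\widehat{V}(t,\ell)=\pm|\ell|^{-2}\widehat{\rho}(t,\ell)$ with $\|\widehat{\rho}(t)\|_{L^\infty_\ell}\le\|\rho(t)\|_{L^1}\le\|\gamma\|_{X_t}$ and $\|\widehat{\rho}(t)\|_{L^2_\ell}=c\|\rho(t)\|_{L^2}\lesssim\langle t\rangle^{-3/2}\|\gamma\|_{X_t}$ (the $L^2$ bound from interpolating the $L^1$ and $\langle t\rangle^3L^\infty$ controls in the $X_t$ norm); and, from $\|\mu(t)\|_{\cA^2}\lesssim\langle t\rangle^{\delta}\|\gamma\|_{X_t}$ and Cauchy--Schwarz,
\[
\|\widehat{\mu}(t)\|_{L^\infty_{k,p}}\le\|\mu(t)\|_{L^1_{x,y}}\lesssim\langle t\rangle^{\delta}\|\gamma\|_{X_t},\qquad \big\||x|^{\theta}\mu(t)\big\|_{L^1_{x,y}}\lesssim\langle t\rangle^{\delta}\|\gamma\|_{X_t}\quad(0\le\theta<\tfrac12),
\]
using $\langle x\rangle^{-2}\langle y\rangle^{-2}\in L^2(\RR^6)$, resp. $|x|^{\theta}\langle x\rangle^{-2}\langle y\rangle^{-2}\in L^2(\RR^6)$ iff $\theta<\tfrac12$; same with $|y|^\theta$.

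\textbf{The two pieces of $I_1$.} Write $e^{-it|\ell|^2}\widehat{\mu}(t,k-\ell,p)-\widehat{\mu}(t,k,p)=(e^{-it|\ell|^2}-1)\widehat{\mu}(t,k-\ell,p)+(\widehat{\mu}(t,k-\ell,p)-\widehat{\mu}(t,k,p))$ and split $I_1=I_1^a+I_1^b$ accordingly. For $I_1^b$ I will use $|\widehat{\mu}(t,k-\ell,p)-\widehat{\mu}(t,k,p)|\le\int|e^{i\ell\cdot x}-1|\,|\mu(t,x,y)|\,dx\,dy\lesssim|\ell|^{\theta}\||x|^{\theta}\mu(t)\|_{L^1}$ for any $\theta<\tfrac12$, reducing $|I_1^b|$ to $\langle t\rangle^{\delta}\|\gamma\|_{X_t}\int|\ell|^{\theta-2}|\widehat{\rho}(t,\ell)|\,d\ell$; splitting the $\ell$-integral at the natural scale $|\ell|=\langle t\rangle^{-1}$, bounding $|\widehat{\rho}|$ by $\|\widehat{\rho}\|_{L^\infty_\ell}$ on the inner region and by $\|\widehat{\rho}\|_{L^2_\ell}$ with Cauchy--Schwarz on the outer one (where $\int_{|\ell|\ge\langle t\rangle^{-1}}|\ell|^{2\theta-4}\,d\ell\lesssim\langle t\rangle^{1-2\theta}$ converges since $\theta<\tfrac12$), both contributions are $\lesssim\langle t\rangle^{-1-\theta}\|\gamma\|_{X_t}$, so $|I_1^b|\lesssim\langle t\rangle^{-1-\theta+\delta}\|\gamma\|_{X_t}^2$, which already beats $\langle t\rangle^{-5/4+2\delta}$ for, say, $\theta=\tfrac14$. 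For $I_1^a$ I will bound $|\widehat{\mu}(t,k-\ell,p)|\le\|\widehat{\mu}(t)\|_{L^\infty}$ and interpolate $|e^{-it|\ell|^2}-1|\lesssim(t|\ell|^2)^{\sigma}$ for a fixed $\sigma<\tfrac14$, reducing $|I_1^a|$ to $\langle t\rangle^{\delta}\|\gamma\|_{X_t}\,t^{\sigma}\int|\ell|^{2\sigma-2}|\widehat{\rho}(t,\ell)|\,d\ell$; the same splitting at $|\ell|=\langle t\rangle^{-1}$ makes each contribution $\lesssim\langle t\rangle^{-1-2\sigma}\|\gamma\|_{X_t}$ (the outer integral $\int_{|\ell|\ge\langle t\rangle^{-1}}|\ell|^{4\sigma-4}\,d\ell\lesssim\langle t\rangle^{1-4\sigma}$ converges precisely because $\sigma<\tfrac14$), whence $|I_1^a|\lesssim\langle t\rangle^{-1-\sigma+\delta}\|\gamma\|_{X_t}^2$; taking $\sigma=\tfrac14-\delta$ yields $|I_1^a|\lesssim\langle t\rangle^{-5/4+2\delta}\|\gamma\|_{X_t}^2$. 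The term $I_2$ is handled identically with $|y|^{\theta}$ in place of $|x|^{\theta}$. (For $0\le t\le1$ the same splitting with threshold $1$ gives the bound with no time decay, which suffices there.) Adding up, $\tfrac{d}{dt}|\widehat{\nu}(t,k,p)|\lesssim\|\gamma\|_{X_t}^2\langle t\rangle^{-5/4+2\delta}$ uniformly in $k,p$.

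\textbf{Conclusion.} Integrating in time, $|\widehat{\nu}(t,k,p)|\le|\widehat{\nu}(0,k,p)|+\int_0^t\big(|I_1|+|I_2|\big)\,ds$. Since $\Phi(0,\cdot,\cdot)=0$, $\widehat{\nu}(0,k,p)=\widehat{\gamma_0}(k,p)$, so $|\widehat{\nu}(0,k,p)|\le\|\gamma_0\|_{L^1_{x,y}}\lesssim\|\gamma_0\|_{\cA^2}\le\varepsilon$; and $\int_0^{\infty}\langle s\rangle^{-5/4+2\delta}\,ds<\infty$ provided $\delta<\tfrac18$. Hence $\|\widehat{\nu}(t)\|_{L^\infty_{k,p}}\lesssim\varepsilon+\|\gamma\|_{X_t}^2$, and since $|e^{i\Phi}|=1$ this is the asserted $\|\widehat{\mu}(t)\|_{L^\infty_{k,p}}\lesssim\varepsilon+\|\gamma\|_{X_t}^2$.

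\textbf{Main difficulty.} The only delicate point is the region $\ell\to0$: there $\widehat{V}(t,\ell)\sim|\ell|^{-2}$ carries no decay in $t$ at all (it is governed by the conserved mass $\widehat{\rho}(t,0)=\int\rho(t)$), so all the time decay must be squeezed out of the vanishing of the difference factors at $\ell=0$, balanced against the outer region at the scale $\langle t\rangle^{-1}$. The exponent $\tfrac54$ is exactly the threshold reachable this way: the phase difference $e^{-it|\ell|^2}-1$ can be used only with the fractional weight $(t|\ell|^2)^{\sigma}$ for $\sigma<\tfrac14$ — beyond that $\int|\ell|^{4\sigma-4}\,d\ell$ diverges at infinity — which caps the gain over the non-integrable $\langle t\rangle^{-1}$ decay at $\langle t\rangle^{-1/4+}$. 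Everything else is the routine splitting of the $\ell$-integral at the natural scale, plus the elementary weighted Sobolev/Cauchy--Schwarz bounds on $\widehat{\mu}$ and $\widehat{\rho}$ recorded in the first step.
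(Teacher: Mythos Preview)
Your proof is correct. The overall architecture matches the paper's: derive $i\partial_t\widehat{\nu}=e^{i\Phi}(R_1+R_2)$, reduce to a pointwise bound on the non-resonant integrals, and exploit the vanishing at $\ell=0$ of the bracketed difference against the Coulomb singularity $|\ell|^{-2}$. The execution differs in a mild but genuine way. The paper performs a \emph{single} frequency split at $|\ell|=\langle t\rangle^{-1/2}$: on the outer shell it discards the difference structure and pairs $\|\widehat{\mu}\|_{L^\infty}$ with $\|\widehat{\rho}\|_{L^2}\||\ell|^{-2}\|_{L^2(|\ell|\ge\langle t\rangle^{-1/2})}$; on the inner shell it bounds $|e^{-it|\ell|^2}\widehat{\mu}(k-\ell,p)-\widehat{\mu}(k,p)|\lesssim|\ell|^{1/2}\langle t\rangle^{1/4+\delta}$ via the Sobolev embedding $\|\widehat{\mu}\|_{L^\infty_p C^{1/2}_k}\lesssim\|\widehat{\mu}\|_{H^2_kH^2_p}=\|\mu\|_{\cA^2}$, and closes with a Hausdorff--Young/H\"older estimate on $\widehat{\rho}$ in $L^{p'}$. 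You instead first separate the two mechanisms of smallness, splitting the bracket into the phase piece $(e^{-it|\ell|^2}-1)\widehat{\mu}(k-\ell,p)$ and the translation piece $\widehat{\mu}(k-\ell,p)-\widehat{\mu}(k,p)$, and only then cut each at $|\ell|=\langle t\rangle^{-1}$, using just $\|\widehat{\rho}\|_{L^\infty}$ inside and $\|\widehat{\rho}\|_{L^2}$ outside. Your H\"older bound on the translation piece, $|\widehat{\mu}(k-\ell,p)-\widehat{\mu}(k,p)|\lesssim|\ell|^{\theta}\||x|^{\theta}\mu\|_{L^1}$ for $\theta<\tfrac12$, is the direct Fourier-side analogue of the paper's $C^{1/2}_k$ embedding and leads to the same numerology. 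The upshot: your route is slightly more modular and avoids Hausdorff--Young and the explicit $H^2\hookrightarrow C^{1/2}$ step, at the cost of one extra decomposition; the paper's route is more compact. Both land on the same rate $\langle t\rangle^{-5/4+2\delta}$, and your concluding $L^\infty_{k,p}$ bound for $\widehat{\mu}$ is obtained exactly as in the paper.
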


\begin{remark}
	In view of the proof of Lemma \ref{lemma:linfty}, it suffices for the global well-posedness theory to estimate $\sup_{k} |\widehat{\mu}(t,k,-k)|$, for which the phase correction is not necessary since $\Phi(t,k,-k) =0$.
\end{remark}

\begin{proof}[Proof of Lemma \ref{lemma:linfty-mod}]
By construction, we compute 
\begin{equation}\label{eqs-nuhat}
	 i\partial_{t} \widehat{\nu}(t,k,p) = e^{i\Phi(t,k,p)} \Big( R_1(t,k,p) +  R_2(t,k,p)\Big)
	\end{equation}
in which 
$$
\begin{aligned}
	R_{1} (t,k,p)
	&= \int_{\RR^3} \widehat{V}(t,\ell) e^{2it\ell \cdot k}  \left(e^{-it|\ell|^2} \widehat{\mu}(t, k-\ell, p) - \widehat{\mu}(t, k, p) \right)  d\ell
		\\
	R_2(t,k,p)&= -\int_{\RR^3}\widehat{V}(t,\ell) e^{- 2it\ell \cdot p} \left(e^{it|\ell |^2}\widehat{\mu}(t, k, p-\ell) - \widehat{\mu}(t, k, p)  \right)  d\ell .
	\end{aligned}
$$
We now bound these integral terms. We focus on estimating $R_1(t,k,p)$; the integral $R_2(t,k,p)$ is similar. 
We first bound the integral over the region where $|\ell |\ge \langle t\rangle^{-1/2}$. Indeed, recalling $\widehat{V}(t,\ell) = \pm |\ell|^{-2}\widehat \rho(t,\ell)$, we bound
\begin{align}
	\label{eq:highfreq}
	\begin{split}
\Big| \int_{|\ell| \ge \langle t \rangle^{-1/2}} &\widehat{V}(t,\ell) e^{2it\ell \cdot k}  \left(e^{-it|\ell|^2} \widehat{\mu}(t, k-\ell, p) - \widehat{\mu}(t, k, p) \right)  d\ell\Big|
	\\& \le 2 \| \widehat{\mu}(t)\|_{L^{\infty}_{k,p}} \, \int_{|\ell| \ge \langle t \rangle^{-1/2}} |\ell|^{-2}| \widehat{\rho}(t,\ell)| \,d\ell 
	\\
	&\lesssim  \| \widehat{\mu}(t)\|_{L^{\infty}_{k,p}} \, \| \widehat\rho(t) \|_{L^2_\ell} \, \| |\ell|^{-2} \chi_{ |\ell| \ge \langle t \rangle^{-1/2}} \|_{L^2_\ell} 
		\\
	&\lesssim  \langle t\rangle^{1/4}\| \widehat{\mu}(t)\|_{L^{\infty}_{k,p}} \, \|\rho(t) \|_{L^2_x} 
		\\
	&\lesssim  \langle t\rangle^{-5/4 + \delta} \| \gamma \|^2_{X_{T}}
	\end{split}
\end{align}
in which we used the bootstrap assumption $\| \widehat{\mu}(t)\|_{L^{\infty}_{k,p}} \lesssim \|\mu\|_{L^1_{x,y}} \lesssim \|\langle x\rangle^2 \langle y\rangle^2\mu\|_{L^2_{x,y}} \lesssim \langle t\rangle^\delta\| \gamma \|_{X_{T}}$ and $\|\rho(t) \|_{L^2_x} \lesssim \varepsilon \langle t\rangle^{-3/2}\| \gamma \|_{X_{T}}$. On the other hand, for $|\ell |\le \langle t\rangle^{-1/2}$, we bound 
\begin{align*}
	|e^{-it\ell^2}\widehat{\mu}(t,k-\ell,p) - \widehat{\mu}(t,k,p) | 
	&\lesssim |e^{-it\ell^2} (\widehat{\mu}(t,k-\ell,p) - \widehat{\mu}(t,k,p))| + |(e^{-it\ell^2} - 1) \widehat{\mu}(t,k,p)| 
	\\
	&\lesssim  |\ell|^{1/2} \| \widehat{\mu}(t)\|_{L^{\infty}_{p} C^{1/2}_{k}} + t|\ell|^2 \| \widehat{\mu}(t)\|_{L^\infty_{k,p}}
	\\
	&\lesssim |\ell|^{1/2} \langle t \rangle^{1/4 + \delta} \| \gamma \|_{X_{T}},
\end{align*}
in which we used $ \| \widehat{\mu}(t)\|_{L^{\infty}_{p} C^{1/2}_{k}} \lesssim  \| \widehat{\mu}(t)\|_{H^2_p H^2_k} \lesssim \| \langle x\rangle^2 \langle y\rangle^2\mu(t)\|_{L^2_{x,y}}$ and $t |\ell|^{3/2} \lesssim t^{1/4}$. Therefore, recalling again $\widehat{V}(t,\ell) = \pm |\ell|^{-2}\widehat \rho(t,\ell)$, we bound 
\begin{align}
	\begin{split}
	\label{eq:I2-I21}
\Big| \int_{|\ell| \le \langle t \rangle^{-1/2}} &\widehat{V}(t,\ell) e^{2it\ell \cdot k}  \left(e^{-it|\ell|^2} \widehat{\mu}(t, k-\ell, p) - \widehat{\mu}(t, k, p) \right) d\ell\Big|
\\	&\lesssim \langle t \rangle^{1/4 +\delta}  \| \gamma\|_{X_{T}} \int_{|\ell| \le \langle t \rangle^{-1/2}} |\ell|^{-3/2} |\widehat{\rho}(t,\ell)| \,d\ell \\
	&\lesssim \langle t \rangle^{1/4 +\delta} \| \gamma \|_{X_{T}} \,  \| \rho(t) \|_{L^p_{x}} \, \| |\ell|^{-3/2} \chi_{|\ell| \le \langle t \rangle^{-1/2}} \|_{L^p_{\ell}} \\
	&\lesssim \langle t \rangle^{- 5/4 + \delta + 3\delta'/2} \| \gamma \|_{X_{T}}^2,
	\end{split}
\end{align}
for $p = (1/2 + \delta')^{-1}$, where $\delta' >0$ is fixed. Combining \eqref{eq:highfreq} and \eqref{eq:I2-I21}, we have obtained 
$$| R_{1} (t,k,p)| \lesssim  \langle t\rangle^{-5/4 + \delta} \| \gamma \|^2_{X_{T}} +  \langle t \rangle^{- 5/4 + \delta + 3\delta'/2} \| \gamma \|_{X_{T}}^2.$$
The lemma thus follows, upon choosing $\delta' = 2\delta/3$ (and $\delta$ as in the iterative norm \eqref{def-normX} is chosen to be  sufficiently small). 
\end{proof}

\begin{lemma}\label{lemma:l2-mod} Introduce $\widehat\nu(t,k,p)$ as in \eqref{def-Ephase}-\eqref{def-newnu}. Then, for any $0\le t\le T$, there holds
	\[
	\frac{d}{dt} \|\nu(t)\|_{L^2_{x,y}} \lesssim \| \gamma \|_{X_{T}}^{2} \langle t\rangle^{-5/4 +\delta}.	\]
\end{lemma}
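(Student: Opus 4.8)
The plan is to transfer the estimate to the Fourier side. By Plancherel $\|\nu(t)\|_{L^2_{x,y}}=c\,\|\widehat\nu(t)\|_{L^2_{k,p}}$, so by Cauchy--Schwarz $\frac{d}{dt}\|\nu(t)\|_{L^2_{x,y}}\lesssim\|\partial_t\widehat\nu(t)\|_{L^2_{k,p}}$, and since $|e^{i\Phi}|=1$, equation \eqref{eqs-nuhat} gives $\|\partial_t\widehat\nu(t)\|_{L^2_{k,p}}=\|R_1(t)+R_2(t)\|_{L^2_{k,p}}\le\|R_1(t)\|_{L^2_{k,p}}+\|R_2(t)\|_{L^2_{k,p}}$ (this is the quantity that actually matters downstream: it is integrable in $t$ since $5/4>1$, which is what lets one build $\nu_\infty=\lim_{t\to\infty}\nu(t)$ in $\mathcal{HS}$). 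It therefore suffices to prove $\|R_1(t)\|_{L^2_{k,p}}\lesssim\|\gamma\|_{X_T}^2\langle t\rangle^{-5/4+\delta}$, the term $R_2$ being entirely symmetric. I would run exactly the frequency decomposition into $|\ell|\ge\langle t\rangle^{-1/2}$ and $|\ell|\le\langle t\rangle^{-1/2}$ from the proof of Lemma \ref{lemma:linfty-mod}, except that instead of pulling out $\|\widehat\mu\|_{L^\infty_{k,p}}$ I apply Minkowski's integral inequality to move the $L^2_{k,p}$ norm inside the $\ell$-integral, using that $\widehat\mu(t,\cdot-\ell,\cdot)$ and $e^{-it|\ell|^2}\widehat\mu(t,\cdot,\cdot)$ have the same $L^2_{k,p}$ norm as $\widehat\mu(t)$. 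The inputs are $\|\widehat\mu(t)\|_{L^2_{k,p}}=\|\mu(t)\|_{L^2_{x,y}}=\|\gamma(t)\|_{\mathcal{HS}}\le\langle t\rangle^\delta\|\gamma\|_{X_T}$ and $\|\nabla_k\widehat\mu(t)\|_{L^2_{k,p}}=\||x|\mu(t)\|_{L^2_{x,y}}\le\|\mu(t)\|_{\cA^2}\le\langle t\rangle^\delta\|\gamma\|_{X_T}$ (from the $\cH^2$ and $\cA^2$ parts of $\|\gamma\|_{X_T}$, the former because conjugation by $e^{\pm it\Delta}$ is unitary on $\mathcal{HS}$), together with the density decay $\|\widehat\rho(t)\|_{L^2_\ell}=\|\rho(t)\|_{L^2_x}\lesssim\langle t\rangle^{-3/2}\|\gamma\|_{X_T}$.

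For $|\ell|\ge\langle t\rangle^{-1/2}$, Minkowski's inequality and $|e^{-it|\ell|^2}|=1$ bound the corresponding part of $R_1$ by $2\|\widehat\mu(t)\|_{L^2_{k,p}}\int_{|\ell|\ge\langle t\rangle^{-1/2}}|\ell|^{-2}|\widehat\rho(t,\ell)|\,d\ell$; estimating the $\ell$-integral by Cauchy--Schwarz exactly as in \eqref{eq:highfreq} by $\langle t\rangle^{1/4}\|\rho(t)\|_{L^2_x}\lesssim\langle t\rangle^{-5/4}\|\gamma\|_{X_T}$, this part is $\lesssim\langle t\rangle^{-5/4+\delta}\|\gamma\|_{X_T}^2$. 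For $|\ell|\le\langle t\rangle^{-1/2}$, I split $e^{-it|\ell|^2}\widehat\mu(t,k-\ell,p)-\widehat\mu(t,k,p)=e^{-it|\ell|^2}\big(\widehat\mu(t,k-\ell,p)-\widehat\mu(t,k,p)\big)+\big(e^{-it|\ell|^2}-1\big)\widehat\mu(t,k,p)$, using the $L^2$-valued modulus-of-continuity bound $\|\widehat\mu(t,\cdot-\ell,\cdot)-\widehat\mu(t,\cdot,\cdot)\|_{L^2_{k,p}}\le|\ell|\,\|\nabla_k\widehat\mu(t)\|_{L^2_{k,p}}$ and $|e^{-it|\ell|^2}-1|\le t|\ell|^2$. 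After Minkowski and $\widehat V(t,\ell)=\pm|\ell|^{-2}\widehat\rho(t,\ell)$, the first contribution is $\lesssim\langle t\rangle^\delta\|\gamma\|_{X_T}\int_{|\ell|\le\langle t\rangle^{-1/2}}|\ell|^{-1}|\widehat\rho(t,\ell)|\,d\ell\lesssim\langle t\rangle^\delta\|\gamma\|_{X_T}\,\|\widehat\rho(t)\|_{L^2_\ell}\langle t\rangle^{-1/4}\lesssim\langle t\rangle^{-7/4+\delta}\|\gamma\|_{X_T}^2$, and the second is $\lesssim t\langle t\rangle^\delta\|\gamma\|_{X_T}\int_{|\ell|\le\langle t\rangle^{-1/2}}|\widehat\rho(t,\ell)|\,d\ell\lesssim t\langle t\rangle^\delta\|\gamma\|_{X_T}\,\|\widehat\rho(t)\|_{L^2_\ell}\langle t\rangle^{-3/4}\lesssim\langle t\rangle^{-5/4+\delta}\|\gamma\|_{X_T}^2$, where I used $\||\ell|^{-1}\chi_{|\ell|\le\langle t\rangle^{-1/2}}\|_{L^2_\ell}\sim\langle t\rangle^{-1/4}$ and $|\{|\ell|\le\langle t\rangle^{-1/2}\}|^{1/2}\sim\langle t\rangle^{-3/4}$. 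Summing these contributions gives $\|R_1(t)\|_{L^2_{k,p}}\lesssim\langle t\rangle^{-5/4+\delta}\|\gamma\|_{X_T}^2$, and likewise for $R_2$, which is the claim.

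The step I expect to be the main obstacle is the low-frequency region $|\ell|\le\langle t\rangle^{-1/2}$: there one cannot estimate $\widehat V$ and $\widehat\mu$ separately because of the $|\ell|^{-2}$ singularity of $\widehat V(t,\ell)$, and must instead exploit the cancellation built into $e^{-it|\ell|^2}\widehat\mu(t,k-\ell,p)-\widehat\mu(t,k,p)$ — exactly the delicate point already present in Lemma \ref{lemma:linfty-mod}. The only new ingredient over the $L^\infty$ estimate is that this cancellation must now be quantified through an $L^2_{k,p}$ modulus of continuity, controlled via $\|\langle x\rangle^2\langle y\rangle^2\mu(t)\|_{L^2_{x,y}}$ rather than the Hölder norm $\|\widehat\mu(t)\|_{L^\infty_p C^{1/2}_k}$; and, as there, the ``phase difference'' term $(e^{-it|\ell|^2}-1)\widehat\mu(t,k,p)$ is the one that saturates the rate $\langle t\rangle^{-5/4+\delta}$.
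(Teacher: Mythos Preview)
Your proof is correct and follows essentially the same approach as the paper: the same frequency splitting at $|\ell|=\langle t\rangle^{-1/2}$, the same use of Minkowski to move the $L^2_{k,p}$ norm inside the $\ell$-integral, and the same $L^2$ modulus-of-continuity bound $\|\widehat\mu(t,\cdot-\ell,\cdot)-\widehat\mu(t,\cdot,\cdot)\|_{L^2_{k,p}}\le|\ell|\,\|\nabla_k\widehat\mu(t)\|_{L^2_{k,p}}$ for the low-frequency cancellation. The only cosmetic difference is that the paper combines the two low-frequency pieces into a single bound $|\ell|\langle t\rangle^{1/2}\|\gamma\|_{X_T}$ (via $t|\ell|\le\langle t\rangle^{1/2}$) before integrating in $\ell$, whereas you estimate them separately, obtaining $\langle t\rangle^{-7/4+\delta}$ and $\langle t\rangle^{-5/4+\delta}$; both routes give the same final rate.
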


\begin{proof} The proof follows similarly as done in the previous lemma. Indeed, it suffices to bound the integral terms $R_1(t,k,p)$ in $L^2_{k,p}$. We first bound 
$$
\begin{aligned}
\Big\| \int_{|\ell| \ge \langle t \rangle^{-1/2}} &\widehat{V}(t,\ell) e^{2it\ell \cdot k}  \left(e^{-it|\ell|^2} \widehat{\mu}(t, k-\ell, p) - \widehat{\mu}(t, k, p) \right)  d\ell\Big\|_{L^2_{k,p}}
	\\& \le 2 \| \widehat{\mu}(t)\|_{L^{2}_{k,p}} \, \int_{|\ell| \ge \langle t \rangle^{-1/2}} |\ell|^{-2} | \widehat{\rho}(t,\ell)| \,d\ell 
\end{aligned}
$$
which is again bounded by  $\langle t\rangle^{-5/4 + \delta} \| \gamma \|^2_{X_{T}}$, exactly as done in \eqref{eq:highfreq}. 
On the other hand, for $|\ell |\le \langle t\rangle^{-1/2}$, we bound 
$$ 
|\widehat{\mu}(t,k-\ell,p) - \widehat{\mu}(t,k,p)|^2 \le \Big|\int_0^1 \ell \cdot \nabla_k \widehat\mu(t,k-\theta\ell, p)\, d\theta\Big|^2  \le |\ell|^2\int_0^1 |\nabla_k \widehat\mu(t,k-\theta\ell, p)|^2 \, d\theta.
$$

This yields 
\begin{align*}
	\|e^{-it\ell^2}\widehat{\mu}(t,k-\ell,p) - \widehat{\mu}(t,k,p) \|_{L^2_{k,p}} 
	&\lesssim \|\widehat{\mu}(t,k-\ell,p) - \widehat{\mu}(t,k,p)\|_{L^2_{k,p}} + |(e^{-it\ell^2} - 1)|  \|\widehat{\mu}(t)\|_{L^2_{k,p}} 
	\\
	&\lesssim |\ell|\|\nabla_k \widehat\mu(t)\|_{L^2_{k,p}}+ t|\ell|^2 \| \widehat{\mu}(t)\|_{L^2_{k,p}} 
	\\
	&\lesssim |\ell|\langle t \rangle^{1/2} \| \gamma \|_{X_{T}},
\end{align*}
recalling $t|\ell|\le \langle t\rangle^{1/2}$. 
Therefore, we bound 
$$
\begin{aligned}
\Big\| \int_{|\ell| \le \langle t \rangle^{-1/2}} &\widehat{V}(t,\ell) e^{2it\ell \cdot k}  \left(e^{-it|\ell|^2} \widehat{\mu}(t, k-\ell, p) - \widehat{\mu}(t, k, p) \right)  d\ell \Big\|_{L^2_{k,p}}
\\  &\lesssim \langle t \rangle^{1/2}  \| \gamma\|_{X_{T}} \int_{|\ell| \le \langle t \rangle^{-1/2}} |\ell|^{-1} |\widehat{\rho}(t,\ell)| \,d\ell \\
	&\lesssim \langle t \rangle^{1/2} \| \gamma \|_{X_{T}} \,  \| \rho(t) \|_{L^2_{x}} \, \| |\ell|^{-1} \chi_{|\ell| \le \langle t \rangle^{-1/2}} \|_{L^2_{\ell}} \\
	&\lesssim \langle t \rangle^{- 5/4 + \delta} \| \gamma \|_{X_{T}}^2.
\end{aligned}
$$
Hence we obtain the lemma. 
\end{proof}

\section{Proof of Theorem \ref{thm:GWP}}

In this section, we complete the proof of Theorem \ref{thm:GWP}. Indeed, introduce the iterative norm $\|\gamma\|_{X_T}$ as in \eqref{def-normX}, 
which we recall 
$$
	\| \gamma\|_{X_t}= \sup_{0\le s\le t} \Big\{ \|\rho(s)\|_{L^1}+
\langle s \rangle^{3} \| \rho(s)\|_{L^{\infty}} 
		+ \langle s \rangle^{-\delta} \| \gamma(s) \|_{\cH^{2}}
		+ \langle s \rangle^{-\delta} \|\mu(s) \|_{\cA^2}
	\Big\},
$$
in which $\mu(t) = e^{-it \Delta} \gamma(t) e^{it \Delta}$. We now establish the a priori bounds \eqref{apriori}: namely, $\| \gamma\|_{X_t} \le C_0 \varepsilon  + C_1 \| \gamma\|_{X_t}^2$. The boundedness of $\rho(t)$ in $L^1_x$ follows from the conservation of $\gamma(t)$ in the finite trace class, while using Lemma \ref{lemma:linfty} for any $0<\beta <1/2$, we bound 
$$ \|\rho(t)\|_{L^\infty} \lesssim \langle t\rangle^{-3} \| \widehat\mu(t)\|_{L^\infty_{k,p}} + \langle t\rangle^{-3-\beta} \|\langle x\rangle^2 \langle y\rangle^2\mu(t)\|_{L^2_{x,y}}.$$
Thanks to Lemmas \ref{lemma:linfty-mod} and \ref{lem-bdmu}, we have $\|  \widehat{\mu}(t)\|_{L^\infty_{k,p}} \lesssim  \varepsilon+\| \gamma \|_{X_{T}}^{2}$ and $\|\langle x\rangle^2 \langle y\rangle^2\mu(t)\|_{L^2_{x,y}} \lesssim \varepsilon+\langle t\rangle^\delta\| \gamma \|^2_{X_{T}}$, respectively.
Choosing $\beta \ge\delta$, the above thus yields 
$$\langle t\rangle^3\|\rho(t)\|_{L^\infty} \lesssim \varepsilon+\| \gamma \|_{X_{T}}^{2}.$$
Bounds on $\| \gamma(t) \|_{\cH^{2}}$ and $\|\mu(t) \|_{\cA^2}$ are already obtained in Lemmas \ref{lem-EE} and \ref{lem-bdmu}, respectively, completing the proof of \eqref{apriori}. This proves that $\|\gamma\|_{X_t} \lesssim \varepsilon$ for all positive times $t\ge 0$. 

It remains to establish the modified scattering of solutions $\gamma(t)$. Indeed, in view of Lemma \ref{lemma:l2-mod}, we obtain 
\[
	\|\nu(t_{1}) - \nu(t_{2})\|_{L^2_{x,y}} \lesssim \varepsilon ^2  \int_{t_{2}}^{t_{1}} \langle s\rangle^{-5/4+\delta} \, ds \lesssim \varepsilon^2 \langle t_2\rangle^{-1/4 + \delta},
\]
for all $t_{1} \ge t_{2} \ge 0$ so that $\nu(t)$ has the unique limit $\gamma_{\infty} \in L^2_{x,y}$ as $t\to \infty$. In addition, there holds
\[
	\| \nu(t) - \gamma_{\infty} \|_{L^2_{x,y}} \lesssim \varepsilon  \langle t \rangle^{-1/4 + \delta}.
\]
Recalling \eqref{def-newnu}, this proves that 
\begin{equation}\label{cv-gammam}
	\| \gamma(t) - e^{-i\Psi(t,-i\nabla_x) }  e^{it\Delta_x} \gamma_{\infty} e^{i\Psi(t,-i\nabla_x) }  e^{-it\Delta_x} \|_{L^2_{x,y}} \lesssim \varepsilon  \langle t \rangle^{-1/4 + \delta},
\end{equation}
in which the phase modification $\Psi(t,k)$ is defined as in \eqref{def-Ephase}. Let us further expand $\Psi(t,k)$. In view of the proof of Lemma \ref{lemma:linfty}, see \eqref{den-ld}, for $t\ge 1$, we may write 
\begin{align*}
	\rho(t,x) = \frac{1}{t^3} \widehat{\gamma_{\infty}} \left( \frac{x}{2t}, - \frac{x}{2t} \right) + \rho^r(t,x),  
\end{align*}
in which $|\rho^r(t,x)|\lesssim \varepsilon  \langle t\rangle^{-3-\delta}$.
For $0\le t\le 1$, we simply bound $\rho(t,x)$ by $C_0 \varepsilon$. 
Hence, we compute 
\begin{align*}
	\Psi(t,k)
	&= \int_0^{t} V(s, 2sk) ds =\pm \int_0^{t} \int_{\mathbb{R}^3} \frac{1}{|2sk- \eta|} \rho(s, \eta) \,d\eta ds \\
	&= \pm \int_0^{t} 4s^2 \int_{\mathbb{R}^3}\frac{1}{|k-\eta|} \rho(s,2s\eta) \, d\eta ds \\
	&= \pm 4 \log t \int_{\mathbb{R}^3}\frac{1}{|k-\eta|}  \widehat{\gamma_{\infty}} (\eta, -\eta) d\eta + \int_0^1 V(s, 2sk) ds \pm \int_1^{t} |x|^{-1}\star_x\rho^r(s,2sk) \; ds,
\end{align*}
in which the integrand in the last integral satisfies $|x|^{-1}\star_x\rho^r(s,2sk) \lesssim \varepsilon \langle s\rangle^{-1-\delta}$. Set 
	$$
	\begin{aligned}
	 g_\infty(k) &=  \pm \int_{\mathbb{R}^3}\frac{1}{|k-\eta|}  \widehat{\gamma_{\infty}} (\eta, -\eta) d\eta
	 \\
	 h_\infty(k) &= \int_0^1 V(s, 2sk) ds \pm \int_1^\infty |x|^{-1}\star_x\rho^r(s,2sk) \; ds
	 \end{aligned}$$
both of which are stationary. Hence, we obtain 
$$	\Psi(t,k) =  h_\infty(k) + g_\infty(k) \log t + \Psi^r(t,k)$$
where $ \Psi^r(t,k) = \mp \int_t^\infty |x|^{-1}\star_x\rho^r(s,2sk) \; ds$, which is bounded by $\varepsilon \langle t\rangle^{-\delta}$. Putting these into \eqref{cv-gammam}, we deduce \eqref{modifieds} as claimed, upon redefining $\gamma_\infty$ by $e^{-i h_\infty(-i\nabla_x)} \gamma_\infty e^{i h_\infty(-i\nabla_x)}$ and replacing $e^{\pm i\Psi^r(t,-i\nabla_x)}$ by $1$ leading to an error of order $\varepsilon t^{-\delta}$. This ends the proof of Theorem \ref{thm:GWP}. 

~\\
{\em Acknowledgement.}
The authors would like to thank Phan Thanh Nam for his many fruitful discussions related to this work. The research is supported in part by the NSF under grants DMS-2054726 and DMS-2349981.

\bibliographystyle{plain}

\end{document}